\documentclass[a4paper,12pt]{amsart}
\usepackage[english]{babel}
\usepackage{amsmath,amsthm,amssymb,amsfonts}
\usepackage{multicol}
 
\usepackage[pdftex]{color}
\usepackage[bookmarks=true,hyperindex,pdftex,colorlinks, citecolor=blue,linkcolor=blue, urlcolor=blue]{hyperref}
\usepackage[dvipsnames]{xcolor}
\usepackage{mathtools}
\usepackage{graphicx}
\usepackage[titletoc]{appendix}
\usepackage{tikz}


\usepackage[normalem]{ulem}

\usetikzlibrary{matrix}

\parskip=1ex
\textwidth=16cm
 \hoffset=-1.5cm

\newtheorem{theorem}{Theorem}[section]
\newtheorem{lemma}[theorem]{Lemma}

\newtheorem{proposition}[theorem]{Proposition}
\newtheorem{corollary}[theorem]{Corollary}
\newtheorem{question}[theorem]{Question}
\theoremstyle{definition}
\newtheorem{definition}[theorem]{Definition}
\newtheorem{example}[theorem]{Example}

\theoremstyle{remark}
\newtheorem{remark}[theorem]{Remark}
\numberwithin{equation}{section}

\newcommand{\R}{\mathbb{R}}
\newcommand{\C}{\mathbb{C}}
\newcommand{\N}{\mathbb{N}}
\newcommand{\K}{\mathbb{K}}

\DeclareMathOperator{\dist}{dist\,}

\DeclareMathOperator{\co}{co}

\DeclareMathOperator{\re}{Re}
\DeclareMathOperator{\Ext}{Ext}
\DeclareMathOperator{\id}{Id}
\newcommand{\cconv}{\overline{\co}}

\newcommand{\nn}[1]{{\left\vert\kern-0.25ex\left\vert\kern-0.25ex\left\vert #1 
		\right\vert\kern-0.25ex\right\vert\kern-0.25ex\right\vert}}

\renewcommand{\geq}{\geqslant}
\renewcommand{\leq}{\leqslant}

\newcommand{\NA}{\operatorname{NA}}
\newcommand{\spann}{\operatorname{span}}

\newcommand{\pten}{\ensuremath{\widehat{\otimes}_\pi}}
\newcommand{\iten}{\ensuremath{\widehat{\otimes}_\varepsilon}}
\newcommand{\eps}{\varepsilon}

\begin{document}

\title{Norm-attaining tensors and nuclear operators}

\author[Dantas]{Sheldon Dantas}
\address[Dantas]{Departament de Matemàtiques and Institut Universitari de Matemàtiques i Aplicacions de Castelló (IMAC), Universitat Jaume I, Campus del Riu Sec. s/n, 12071 Castelló, Spain. \newline
	\href{http://orcid.org/0000-0001-8117-3760}{ORCID: \texttt{0000-0001-8117-3760} } }
\email{\texttt{dantas@uji.es}}
\urladdr{\url{www.sheldondantas.com}}

\author[Jung]{Mingu Jung}
\address[Jung]{Department of Mathematics, POSTECH, Pohang 790-784, Republic of Korea \newline
\href{http://orcid.org/0000-0000-0000-0000}{ORCID: \texttt{0000-0003-2240-2855} }}
\email{\texttt{jmingoo@postech.ac.kr}}

\author[Rold\'an]{\'Oscar Rold\'an}
\address[Rold\'an]{Departamento de An\'alisis Matem\'atico, Universitat de Val\`encia, Valencia, Spain \newline
\href{https://orcid.org/0000-0002-1966-1330}{ORCID: \texttt{0000-0002-1966-1330} }}
\email{\texttt{oscar.roldan@uv.es}}

\author[Rueda Zoca]{Abraham Rueda Zoca}
\address[Rueda Zoca]{Universidad de Murcia, Departamento de Matem\'{a}ticas, Campus de Espinardo 30100 Murcia, Spain
 \newline
\href{https://orcid.org/0000-0003-0718-1353}{ORCID: \texttt{0000-0003-0718-1353} }}
\email{\texttt{abraham.rueda@um.es}}
\urladdr{\url{https://arzenglish.wordpress.com}}

\begin{abstract} Given two Banach spaces $X$ and $Y$, we introduce and study a concept of norm-attainment in the space of nuclear operators $\mathcal N(X,Y)$ and in the projective tensor product space $X\pten Y$. We exhibit positive and negative examples where both previous norm-attainment hold. We also study the problem of whether the class of elements which attain their norms in $\mathcal N(X,Y)$ and in $X\pten Y$ is dense or not. We prove that, for both concepts, the density of norm-attaining elements holds for a large class of Banach spaces $X$ and $Y$ which, in particular, covers all classical Banach spaces. Nevertheless, we present Banach spaces $X$ and $Y$ failing the approximation property in such a  way that the class of elements in $X\pten Y$ which attain their projective norms is not dense. We also discuss some relations and applications of our work to the classical theory of norm-attaining operators throughout the paper.
\end{abstract}

\thanks{The first author was supported by the project OPVVV CAAS CZ.02.1.01/0.0/0.0/16\_019/0000778 and by the Estonian Research Council grant PRG877. The second author was supported by the Basic Science Research Program through the National Research Foundation of Korea (NRF) funded by the Ministry of Education (NRF-2019R1A2C1003857). The third author was supported by the Spanish Ministerio de Ciencia, Innovaci\'on y Universidades, grant FPU17/02023, and by the MINECO and FEDER project MTM2017-83262-C2-1-P. The fourth author was supported by Juan de la Cierva-Formaci\'on fellowship FJC2019-039973, by MTM2017-86182-P (Government of Spain, AEI/FEDER, EU), by MICINN (Spain) Grant PGC2018-093794-B-I00 (MCIU, AEI, FEDER, UE), by Fundaci\'on S\'eneca, ACyT Regi\'on de Murcia grant 20797/PI/18, by Junta de Andaluc\'ia Grant A-FQM-484-UGR18 and by Junta de Andaluc\'ia Grant FQM-0185.}

\subjclass[2010]{Primary 46B04; Secondary 46B25, 46A32, 46B20}
\keywords{Bishop-Phelps theorem; norm-attaining operators; nuclear operators; tensor products}

\maketitle

\thispagestyle{plain}

\section{Introduction}

One of the most classical topics in the theory of Banach spaces is the study of norm-attaining functions. As a matter of fact, one of the most famous characterizations of reflexivity, due to R. James, is described in terms of linear functionals which attain their norms (see, for instance, \cite[Corollary 3.56]{FHHMPZ}). In the same direction, E. Bishop and R. Phelps proved that the set of all norm-attaining linear functionals is dense in $X^*$ (see \cite{BP}). This motivated J. Lindenstrauss to study the analogous problem for bounded linear operators in his seminal paper \cite{linds2}, where it was obtained for the first time an example of a Banach space such that the Bishop-Phelps theorem is no longer true for this class of functions. Consequently, this opened the gate for a crucial and vast research on the topic during the past fifty years in many different directions. Indeed, just to name a few, J. Bourgain, R.E. Huff, J. Johnson, W. Schachermayer, J.J. Uhl, J. Wolfe, and V. Zizler continued the study about the set of all linear operators which attain their norms (\cite{B, H, JW, S, U, Z}); M. Acosta, R. Aron, F.J. Aguirre, Y.S. Choi, R. Pay\'a (\cite{gasp, AFW, Choi} tackled problems in the same line involving bilinear mappings; D. Garc\'ia and M. Maestre considered it for homogeneous polynomials (see \cite{AGM, ADM}); and more recently several problems on norm-attainment of Lipschitz maps were considered (see \cite{CCGMR,CGMR,godsurvey,kms}).

Six years ago, M. Mart\'in solved negatively a problem from the 1970s (posed explicitly by J. Diestel and J. Uhl in \cite{DU} and J. Johnson and J. Wolfe in \cite{JW}) on whether or not every compact operator can be approximated by norm-attaining operators (see \cite[Theorem 1]{martinjfa}). On the other hand, the main open problem in the theory of norm-attaining operators nowadays seems to be if every finite-rank operator can be approximated by norm-attaining operators (see \cite[Question 9]{martinjfa}). Since every nuclear operator is a limit of a sequence of finite-rank operators, we were motivated to try to take one step further in the theory by studying the set of all nuclear operators which attain their (nuclear) norms systematically.

On account of clear relations between nuclear operators and projective tensor products, we focus also on a concept of norm-attainment in projective tensor products (see Definition \ref{defi:normattainnuc}). This is justifiable, since it has strong and deep connections with different open problems coming from the study of norm-attaining operators. To mention one of them, let $Y$ be a finite dimensional Banach space. Then, for an arbitrary Banach space $Z$, every operator from $Y$ into $Z$ attains its norm by using the compactness of the unit ball of $Y$. If we suppose that the same happens with the nuclear operators, since $Y$ is finite dimensional, we would have that the set of all norm-attaining tensors in $Y^* \pten Z$ is the whole set $Y^* \pten Z$ for every Banach space $Z$. By Corollary \ref{cor:projnotnormattain}, the set of all norm-attaining operators from $Z$ into $Y$ would be dense in $\mathcal{L}(Z, Y)$ for every Banach space $Z$ and this would mean finally that every finite-rank operator can be approximated by norm-attaining operators.

We proceed now to describe the content of the paper. In Section \ref{back}, we give the necessary background material to help the reader to follow the track of ideas from the text without having to jump into references so often. In particular, we give the precise definitions of norm-attainment in the context of nuclear operators and tensor products (see Subsection \ref{back:normattaining}) as well as the concepts of approximations. Section \ref{section:norm-attaining} is devoted to the first examples of nuclear operators and tensors which attain their norms. We give a characterization for these kind of elements, which will be very helpful during the entire paper. We prove that if every element in the projective tensor product between two Banach spaces $X, Y$ attains its projective norm, then the set of all norm-attaining operators from $X$ into $Y^*$ is dense. Since there exist operators which cannot be approximated by norm-attaining operators, this result gives the first examples of nuclear operators that do not attain their nuclear norms, meaning that the study of norm-attaining nuclear operators is not a trivial problem. In Section \ref{section:denseness}, we show that the set of all norm-attaining tensors (in particular, we get the analogous result for nuclear operators) is dense in the projective tensor product whenever both factors are finite dimensional Banach spaces (actually, our result is more general than this). 
By using this result and the fact that the projective norm respects 1-complemented subspaces, we prove that the density problem holds in a much more general scenario. Indeed, we prove that if the involved Banach spaces satisfy a property which guarantees the existence of many 1-complemented subspaces (see Definition \ref{def:propertyP}), then every tensor can be approximated (in the projective norm) by norm-attaning tensors (and the result for nuclear operators follows as a particular case). Since this property is satisfied by Banach spaces with finite dimensional decompositions of constant $1$, $L_p$-spaces, and $L_1$-predual spaces, the problem of denseness for nuclear operators and tensors is covered by all classical Banach spaces. Moreover, we prove that such a property is stable by finite absolute sums, countable $c_0$- and $\ell_p$-sums, projective tensor products, and injective tensor products. In Section \ref{contraejemplocojonudo}, we present an example of two Banach spaces $X$ and $Y$, both failing the approximation property, which shows that the set of norm-attaining tensors is not always dense in the projective tensor product space based on the counterexample given in \cite{martinjfa} with the existence of an equivalent renorming of $c_0$ which has bidual strictly convex (see \cite{KGM, KGMW, R}). Finally, we finish the paper with a discussion on some open problems.

\section{Background, Notation, and Concepts} \label{back}

\subsection{Basic Notation} We use essentially the notation from \cite{rya}. Let $X$, $Y$, and $Z$ be Banach spaces over the field $\K$, which can be either $\R$ or $\C$. We denote by $B_X$ and $S_X$ the closed unit ball and the unit sphere, respectively, of the Banach space $X$. We denote by $\mathcal{L}(X, Y)$ the set of all bounded linear operators from $X$ into $Y$. If $Y = \K$, then $\mathcal{L}(X, \K)$ is denoted by $X^*$, the topological dual space of $X$. We denote by $\mathcal{B}(X \times Y, Z)$ the Banach space of bounded bilinear mappings from $X \times Y$ into $Z$. When $Z = \K$, we denote this space by $\mathcal{B}(X \times Y)$. It is well-known that the space $\mathcal{B}(X \times Y)$ and $\mathcal{L}(X, Y^*)$ are isometrically isomorphic as Banach spaces. We denote by $\mathcal{K}(X, Y)$ the set of all compact operators and by $\mathcal{F}(X, Y)$ the space of all operators of finite-rank from $X$ into $Y$. Given an absolute norm $| \cdot |_a$ defined on $\mathbb{R}^2$, let us denote by $X \oplus_a Y$ the {\it absolute sum} of $X$ and $Y$ with respect to $| \cdot |_a$, which is a Banach space $X \times Y$ endowed with the norm $\|(x, y)\|_a = |(\|x\|, \|y\|)|_a$ for every $x \in X$ and $y \in Y$.

\subsection{Tensor Products and Nuclear Operators} \label{subsect:tensorprod}

The projective tensor product of $X$ and $Y$, denoted by $X \pten Y$, is the completion of the space $X \otimes Y$ endowed with the norm given by 

\begin{eqnarray*}
\|z\|_{\pi} &=& \inf \left\{ \sum_{n=1}^{\infty} \|x_n\| \|y_n\|: \sum_{n=1}^{\infty} \|x_n\| \|y_n\| < \infty, z = \sum_{n=1}^{\infty} x_n \otimes y_n \right\} \\
&=& \inf \left\{ \sum_{n=1}^{\infty} |\lambda_n|: z = \sum_{n=1}^{\infty} \lambda_n x_n \otimes y_n, \sum_{n=1}^{\infty} |\lambda_n| < \infty, \|x_n\| = \|y_n\| = 1 \right\},
\end{eqnarray*}
where the infinum is taken over all such representations of $z$. It is well-known that $\|x \otimes y\|_{\pi} = \|x\| \|y\|$ for every $x \in X$, $y \in Y$, and the closed unit ball of $X \pten Y$ is the closed convex hull of the set $B_X \otimes B_Y = \{ x \otimes y: x \in B_X, y \in B_Y \}$. Throughout the paper, we will be using both formulas indistinctly, without any explicit reference. The canonical identification $\mathcal{B}(X \times Y, Z) = \mathcal{L}(X \pten Y, Z)$ allows us to obtain the canonical identification $\mathcal{B}(X \times Y) = (X \pten Y)^*$. Using the fact that the spaces $\mathcal{B}(X \times Y)$ and $\mathcal{L}(X, Y^*)$ are isometrically isomorphic, we also have the identification $(X \pten Y)^* = \mathcal{L}(X, Y^*)$, where the action of an operator $G: X \longrightarrow Y^*$ as a linear functional on $X \pten Y$ is given by
\begin{equation*}
G \left( \sum_{n=1}^{\infty} x_n \otimes y_n \right) = \sum_{n=1}^{\infty} G(x_n)(y_n),
\end{equation*}
for every $\sum_{n=1}^{\infty} x_n \otimes y_n \in X \pten Y$. Let us recall also that there is a canonical operator $J: X^* \pten Y \longrightarrow \mathcal{L}(X, Y)$ with $\|J\| = 1$ defined by $z = \sum_{n=1}^{\infty} \varphi_n \otimes y_n \mapsto L_z$, where $L_z: X \longrightarrow Y$ is given by
\begin{equation*} 
L_z(x) = \sum_{n=1}^{\infty} \varphi_n(x) y_n \ \ \ (x \in X).
\end{equation*} 
The operators that arise in this way are called {\it nuclear operators}. We denote the set of such operators by $\mathcal{N}(X, Y)$ endowed with the {\it nuclear norm} 
\begin{equation*}
\|T\|_N = \inf \left\{ \sum_{n=1}^{\infty} \|x_n^*\| \|y_n\|: T(x) = \sum_{n=1}^{\infty} x_n^*(x) y_n \right\},
\end{equation*}
where the infimum is taken over all representations of $T$ of the form $T(x) = \sum_{n=1}^{\infty} x_n^*(x) y_n$ for bounded sequences $(x_n^*) \subseteq X^*$ and $(y_n) \subseteq Y$ such that $\sum_{n=1}^{\infty} \|x_n^*\| \|y_n\| < \infty$. Notice that every nuclear operator is compact since it is the limit in the operator norm of a sequence of finite-rank operators. Using the function $J$, we can identify the space $\mathcal{N} (X, Y)$ with $X^* \pten Y / \ker J$ isometrically. In order to clarify the relations between the set of nuclear operators, the quotient space of the projective tensor product and their respective duals, we consider the following diagram:
	\[
	\begin{tikzpicture} \label{remark:identinucleproj}
	\matrix (m) [matrix of math nodes,row sep=3em,column sep=4em,minimum width=2em]
	{
		(\ker J)^\perp & \left( X^* \pten Y / \ker J \right)^* & \mathcal{N}(X,Y)^* \\
		\, & X^* \pten Y / \ker J & \mathcal{N}(X,Y) \\ };
	\path[-stealth]
	(m-1-1) edge node [above] {$\delta$} (m-1-2)   
	(m-1-2) edge [dashed,-] (m-2-2)
	(m-1-3) edge node [above] {$\widetilde{J}^*$} (m-1-2)
	(m-2-2) edge node [above] {$\widetilde{J}$} (m-2-3)
	(m-1-3) edge [dashed, -] (m-2-3);
	\end{tikzpicture},
	\]
	where $\widetilde{J}$ and $\delta$ are isometric isomorphisms between $X^* \pten Y / \ker J$ and $\mathcal{N}(X, Y)$, and $(\ker J)^\perp$ and $\left( X^* \pten Y / \ker J \right)^*$, respectively.
If we consider a nuclear operator $T \in \mathcal{N}( X, Y)$ given by $T = \sum_{n=1}^\infty x_n^* \otimes y_n$ for some $(x_n^*)_{n \in \N} \subset X^*$ and $(y_n)_{ n \in \N} \subset Y$ bounded with $\sum_{n=1}^{\infty} \|x_n^*\| \|y_n\| < \infty$, then for every $H \in \mathcal{N}(X, Y)^*$, we have 
\[
H (T) = \sum_{n=1}^\infty G(x_n^*) (y_n),
\]
where $G = (\delta^{-1} \circ \widetilde{J}^* ) (H) \in (\ker J)^\perp$.

Recall that a Banach space is said to have the {\it approximation property} if for every compact subset $K$ of $X$ and every $\eps > 0$, there exists a finite-rank operator $T: X \longrightarrow X$ such that $\|T(x) - x\| \leq \eps$ for every $x \in K$. Let us take into account that if $X^*$ or $Y$ has the approximation property, then $X^*\pten Y = \mathcal{N}(X, Y)$ (see, for instance, \cite[Corollary 4.8]{rya}). Recall also that the injective norm of $z \in X \otimes Y$ is defined by
\begin{equation*}
\|z\|_{\eps} = \sup \left\{ \left| \sum_{i=1}^n x^*(x_i) y^*(y_i) \right|: x^* \in B_{X^*}, y^* \in B_{Y^*} \right\},
\end{equation*}
where $\sum_{i=1}^n x_i \otimes y_i$ is any representation of $z$. We denote by $X \otimes_{\eps} Y$ the tensor product $X \otimes Y$ with the injective norm and its completion, denoted by $X \iten Y$, is called the {\it injective tensor product} of $X$ and $Y$.

For a complete background on tensor products in Banach spaces, we refer the reader to the books \cite{DF, rya}.

\subsection{Norm-attaining concepts} \label{back:normattaining} Recall that $T \in \mathcal{L}(X, Y)$ {\it attains its norm} (in the classical way) if there is $x_0 \in S_X$ such that $\|T(x_0)\| = \|T\| = \sup_{x \in S_X} \|T(x)\|$. In this case, we say that $T$ is a {\it norm-attaining} operator. Recall also that $B \in \mathcal{B}(X \times Y, Z)$ attains its norm if there is $(x_0, y_0) \in S_X \times S_Y$ such that $\|B(x_0, y_0)\| = \|B\| = \sup_{(x, y) \in S_X \times S_Y} \|B(x, y)\|$. In this case, we say that $B$ is a norm-attaining bilinear mapping. In the next sections, we will be considering the concepts of attainment on the Banach spaces $X \pten Y$ and $\mathcal{N}(X, Y)$. For us, the most natural approach is the following one.

\begin{definition}\label{defi:normattainnuc} Let $X, Y$ be Banach spaces. We say that 
\begin{itemize}
 
	\item[(1)] $z \in X \pten Y$ {\it attains its projective norm} if there is a bounded sequence $(x_n, y_n) \subseteq X \times Y$ with $\sum_{n=1}^{\infty} \|x_n\| \|y_n\| < \infty$ such that $z=\sum_{n=1}^\infty x_n\otimes y_n$ and that $\|z\|_{\pi} = \sum_{n=1}^{\infty} \|x_n\| \|y_n\|$. In this case, we say that $z$ is a {\it norm-attaining tensor}.
	\vspace{0.2cm} 
	
	\item[(2)] $T \in \mathcal{N}(X, Y)$ {\it attains its nuclear norm} if there is a bounded sequence $(x_n^*, y_n) \subseteq X^* \times Y$ with $\sum_{n=1}^{\infty} \|x_n^*\|\|y_n\| < \infty$ such that $T=\sum_{n=1}^\infty x_n^*\otimes y_n$ and that $\|T\|_N = \sum_{n=1}^{\infty} \|x_n^*\| \|y_n\|$. In this case, we say that $T$ is a {\it norm-attaining nuclear operator}.
\end{itemize}
\end{definition}

If (1) (respectively, (2)) holds, then we say that $\sum_{n=1}^{\infty} x_n \otimes y_n$ (respectively, $\sum_{n=1}^{\infty} x_n^* \otimes y_n$) is a {\it norm-attaining representation}. Let us fix the notation for the set of norm-attaining operators, bilinear mappings, tensors, and nuclear operators. For the first two, we continue using the classical notation $\NA(X, Y) = \{ T \in \mathcal{L}(X, Y): T \ \mbox{attains its norm} \}$ and $\NA(X \times Y, Z) = \{B \in \mathcal{B}(X \times Y, Z): B \ \mbox{attains its norm} \}$, respectively; if $Z = \K$, then we simply denote it as $\NA(X \times Y)$. For the last two, we shall use the following notations:
\begin{itemize}
 	\item[(1')] $\NA_{\pi} (X \pten Y) = \{ z \in X \pten Y: z \ \mbox{attains its projective norm} \}$. 
	\vspace{0.2cm} 
	\item[(2')] $\NA_{\mathcal{N}} (X, Y) = \{ T \in \mathcal{N}(X, Y): T \ \mbox{attains its nuclear norm} \}$. 
\end{itemize}
\noindent 
Notice that, as we have pointed out before, when $X^*$ or $Y$ has the approximation property then $X^*\pten Y$ is isometrically isomorphic to $\mathcal N(X,Y)$. In such case, it is clear that both concepts of norm-attainment agree. Due to the connection between projective tensor products, bilinear mappings, and operators, we are forced to observe also that the denseness of the sets $\NA(X \times Y)$ and $\NA(X, Y^*)$ are {\it not} equivalent in general, but the first implies the later.

Let us finish this introduction by clarifying what we mean by approximating elements from $X \pten Y$ or $\mathcal{N}(X, Y)$ by norm-attaining ones. Evidently, when working with $X \pten Y$, it is natural to make the approximation of an element $z \in X \pten Y$ by an element $z' \in \NA_{\pi}(X \pten Y)$ using the tensor norm $\|\cdot\|_{\pi}$. Similarly, we shall be dealing with the nuclear operator norm $\|\cdot\|_N$ whenever we approximate a given nuclear operator $T$ by a norm-attaining nuclear operator $T'$.

\section{Nuclear operators and tensors which attain their norms} \label{section:norm-attaining}

In this section, we provide the first examples of elements in $X \pten Y$ and $\mathcal{N}(X, Y)$ which attain their norms. The first result gives us an important characterization used abundantly in the rest of the paper.

\begin{theorem} \label{theo:charprojattain} 
	Let $X, Y$ be Banach spaces. Let $z \in X \pten Y$ with 
	\[
	z = \sum_{n=1}^\infty \lambda_n x_n \otimes y_n,
	\]
	where $\lambda_n \in \mathbb{R}^+$, $x_n \in S_{X}$, and $y_n \in S_Y$ for every $n \in \N$. Then, the following assertions are equivalent: 

\begin{enumerate} 
	\item $\|z\|_{\pi} = \sum_{n=1}^\infty \lambda_n$; in other words, $z\in \NA_{\pi}(X\pten Y)$.
	\vspace{0.1cm}
	\item There is $G \in \mathcal{L} (X, Y^*)$ with $\|G\| = 1$ such that $G(x_n)(y_n) = 1$ for every $n \in \N$. 
	\vspace{0.1cm}
	\item Every norm one $G \in \mathcal{L} (X, Y^*)$ such that $G(z) = \| z \|_\pi $ satisfies that $G(x_n) (y_n) = 1$ for every $n \in \N$. 
\end{enumerate} 
\end{theorem}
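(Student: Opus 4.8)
The plan is to exploit the canonical duality $(X \pten Y)^* = \mathcal{L}(X, Y^*)$ recalled in Subsection \ref{subsect:tensorprod}, together with the Hahn--Banach theorem and two elementary inequalities: the given representation always yields $\|z\|_{\pi} \leq \sum_{n=1}^\infty \lambda_n$ (by the very definition of $\|\cdot\|_\pi$ as an infimum over representations, using $\|x_n\| = \|y_n\| = 1$), and any $G \in \mathcal{L}(X, Y^*)$ with $\|G\| = 1$ satisfies $|G(x_n)(y_n)| \leq \|x_n\|\,\|y_n\| = 1$ for every $n$. I would establish the three statements in the cycle $(2) \Rightarrow (1) \Rightarrow (3) \Rightarrow (2)$.

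For $(2) \Rightarrow (1)$, given a norm-one $G$ with $G(x_n)(y_n) = 1$ for all $n$, I compute via the duality formula $G(z) = \sum_{n=1}^\infty \lambda_n G(x_n)(y_n) = \sum_{n=1}^\infty \lambda_n$. Combining $G(z) \leq \|G\|\,\|z\|_{\pi} = \|z\|_{\pi}$ with the trivial bound $\|z\|_{\pi} \leq \sum_{n=1}^\infty \lambda_n$ squeezes everything into the desired equality $\|z\|_{\pi} = \sum_{n=1}^\infty \lambda_n$.

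For $(1) \Rightarrow (3)$, I take an arbitrary norm-one $G$ with $G(z) = \|z\|_{\pi}$ and expand $\|z\|_{\pi} = G(z) = \sum_{n=1}^\infty \lambda_n G(x_n)(y_n)$. Passing to real parts and using $\re(G(x_n)(y_n)) \leq |G(x_n)(y_n)| \leq 1$ together with $\lambda_n > 0$, the hypothesis $\sum_{n=1}^\infty \lambda_n = \|z\|_{\pi}$ forces $\re(G(x_n)(y_n)) = 1$ for every $n$. Finally, $(3) \Rightarrow (2)$ is immediate: since $z \neq 0$ (the condition $\lambda_n > 0$ rules out the degenerate case), the Hahn--Banach theorem produces a norm-one $G$ with $G(z) = \|z\|_{\pi}$, and $(3)$ asserts precisely that this $G$ fulfils the requirement of $(2)$.

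The only genuine subtlety, which I expect to be the main point requiring care, is the complex scalar case in $(1) \Rightarrow (3)$: there each $G(x_n)(y_n)$ is a priori a complex number of modulus at most one, so one cannot argue termwise from the raw scalar identity. The remedy is to take real parts of the identity \emph{before} estimating, extract $\re(G(x_n)(y_n)) = 1$, and then observe that $\re w = |w| = 1$ forces $w = 1$; this last observation is exactly what upgrades the real-part equality back to the full conclusion $G(x_n)(y_n) = 1$.
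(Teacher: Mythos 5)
Your proposal is correct and takes essentially the same route as the paper: the duality $(X \pten Y)^* = \mathcal{L}(X, Y^*)$, the squeeze $\sum_n \lambda_n = G(z) \leq \|z\|_{\pi} \leq \sum_n \lambda_n$ for $(2)\Rightarrow(1)$, a Hahn--Banach functional for $(3)\Rightarrow(2)$, and termwise extraction of $G(x_n)(y_n)=1$ for $(1)\Rightarrow(3)$; your explicit real-part argument for the complex case is in fact slightly more careful than the paper's terse treatment, which only spells out the analogous ``convexity argument'' in the nuclear version (Theorem \ref{theo:charnuclattain}). One cosmetic remark: positivity of the $\lambda_n$ does \emph{not} rule out $z = 0$ (e.g.\ $\frac{1}{2}x \otimes y + \frac{1}{2}(-x)\otimes y$), but this is harmless, since for $z=0$ every norm-one $G$ trivially satisfies $G(z) = \|z\|_{\pi}$ and $(3)\Rightarrow(2)$ goes through without Hahn--Banach.
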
 

\begin{proof} Suppose that $\|z\|_{\pi} = \sum_{n=1}^\infty \lambda_n$ with $z = \sum_{n=1}^\infty \lambda_n x_n \otimes y_n$ with $ (\lambda_n) \subseteq \mathbb{R}^+$, $(x_n) \subseteq S_X$, and $(y_n) \subseteq S_Y$. Pick any $G \in (X \pten Y)^* = \mathcal{L} (X, Y^*)$ such that $\|G \| = 1$ and $G (z) = \|z\|_{\pi}$. Since we have 
	\[
	\sum_{n=1}^{\infty} \lambda_n = \|z\|_{\pi} = G (z) = \sum_{n=1}^\infty \lambda_n G(x_n) (y_n), 
	\]
	it follows that $G(x_n)(y_n) = 1$ for each $n \in \N$, which proves that (1) implies (3). It is obvious that (3) implies (2). Finally, assume that there exists $G \in \mathcal{L} (X, Y^*)$ with $\|G\| = 1$ such that $G(x_n)(y_n) = 1$ for every $n \in \N$. Then, 
	\[
	\sum_{n=1}^\infty \lambda_n = \sum_{n=1}^\infty \lambda_n G(x_n)(y_n) = G(z) \leq \|z\|_{\pi} \leq \sum_{n=1}^\infty \lambda_n. 
	\]
	This completes the proof.
\end{proof} 

Taking into account the isometric isomorphism between $\mathcal{N}(X,Y)$ and $X^*\pten Y/\ker(J)$, we can take advantage of the previous estimates to prove a nuclear operator version of Theorem \ref{theo:charprojattain} as follows.

\begin{theorem}\label{theo:charnuclattain}
Let $X,Y$ be Banach spaces. Let $T \in \mathcal{N}(X,Y)$ with 
	\[
	T = \sum_{n=1}^\infty \lambda_n x_n^* \otimes y_n, 
	\]
	where $\lambda_n \in \mathbb{R}^+$, $x_n \in S_{X}$, and $y_n \in S_Y$ for every $n \in \N$. Then, the following assertions are equivalent: 
	\begin{enumerate}
		\item $\|T \|_{N} = \sum_{n=1}^\infty \lambda_n $; in other words, $T \in \NA_{\mathcal{N}}(X, Y)$.
			\vspace{0.1cm}
		\item There is $G \in (\ker J)^{\perp}$ with $\|G\| = 1$ such that $G(x_n^*)(y_n) = 1$ for every $n \in \N$.
			\vspace{0.1cm}
		\item For any $G \in (\ker J)^{\perp}$ with $\|G\| = 1$ and $G(T)=\| T \|_N$ we get that $G(x_n^*)(y_n) = 1$ holds for every $n \in \N$. 
	\end{enumerate} 
\end{theorem}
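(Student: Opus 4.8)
The plan is to transcribe, almost line for line, the argument of Theorem~\ref{theo:charprojattain}, replacing the identification $(X\pten Y)^* = \mathcal{L}(X,Y^*)$ used there by the isometric identification $\mathcal{N}(X,Y)^* = (\ker J)^\perp$ recorded in the diagram of Subsection~\ref{subsect:tensorprod}. The one thing I would set up carefully at the start is the action of a functional $G\in(\ker J)^\perp\subseteq(X^*\pten Y)^* = \mathcal{L}(X^*,Y^*)$ on the operator $T$: since $G$ annihilates $\ker J$, it descends through the quotient $X^*\pten Y/\ker J$ and, via $\widetilde{J}$, to $\mathcal{N}(X,Y)$, and evaluating on $T$ while using continuity together with the bilinear representation of $G$ yields the formula $G(T)=\sum_{n=1}^\infty \lambda_n\,G(x_n^*)(y_n)$ (this is precisely the displayed identity stated just after the diagram). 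I would also note at once that $\|G\|=1$, computed as the operator norm inherited from $\mathcal{L}(X^*,Y^*)$, forces $|G(x_n^*)(y_n)|\leq \|x_n^*\|\,\|y_n\| = 1$ for every $n$.

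For $(1)\Rightarrow(3)$ I would take an arbitrary $G\in(\ker J)^\perp$ with $\|G\|=1$ and $G(T)=\|T\|_N$, and compute
\[
\sum_{n=1}^\infty \lambda_n = \|T\|_N = G(T) = \sum_{n=1}^\infty \lambda_n\, G(x_n^*)(y_n).
\]
Since the $\lambda_n$ are positive reals and $|G(x_n^*)(y_n)|\leq 1$, comparing the two ends (taking real parts if $\K=\C$) forces $G(x_n^*)(y_n)=1$ for every $n$, which is exactly (3).

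The implication $(3)\Rightarrow(2)$ needs, just as in the projective case, only that the hypothesis of (3) is non-vacuous, i.e. that some norm-one $G\in(\ker J)^\perp$ with $G(T)=\|T\|_N$ exists. Assuming $T\neq 0$ (the case $T=0$ being trivial), this is a Hahn--Banach supporting functional for $T$ in $\mathcal{N}(X,Y)$, transported to $(\ker J)^\perp$ through the isometry $\delta^{-1}\circ\widetilde{J}^*$; applying (3) to this $G$ gives (2). For $(2)\Rightarrow(1)$ I would argue exactly as before: given $G$ as in (2),
\[
\sum_{n=1}^\infty \lambda_n = \sum_{n=1}^\infty \lambda_n\, G(x_n^*)(y_n) = G(T) \leq \|G\|\,\|T\|_N = \|T\|_N \leq \sum_{n=1}^\infty \lambda_n,
\]
where the final inequality holds because $\sum_{n=1}^\infty \lambda_n x_n^*\otimes y_n$ is a representation of $T$ with $\|x_n^*\|=\|y_n\|=1$, hence an admissible competitor in the infimum defining $\|T\|_N$. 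Every inequality is therefore an equality, so $\|T\|_N=\sum_{n=1}^\infty \lambda_n$, that is, $T\in\NA_{\mathcal{N}}(X,Y)$.

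The main obstacle I anticipate is not any single implication but keeping the identifications airtight: one must verify that $G(T)$, evaluated through the quotient map $\widetilde{J}$, really equals $\sum_{n=1}^\infty \lambda_n\,G(x_n^*)(y_n)$ independently of the chosen representation, and that the dual norm of $G$ as an element of $(\ker J)^\perp$ coincides with its operator norm in $\mathcal{L}(X^*,Y^*)$, so that the estimate $|G(x_n^*)(y_n)|\leq 1$ is legitimate. Both points are already contained in the identifications set up in Subsection~\ref{subsect:tensorprod}; once they are invoked, the argument is a faithful copy of the projective case.
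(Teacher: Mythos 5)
Your proposal is correct and follows essentially the same route as the paper's own proof: identify $\mathcal{N}(X,Y)^*$ with $(\ker J)^\perp$ through the quotient $X^*\pten Y/\ker J$, prove $(1)\Rightarrow(3)$ by the chain $\sum_n\lambda_n = G(T) = \sum_n\lambda_n G(x_n^*)(y_n)$ together with $|G(x_n^*)(y_n)|\leq 1$ and a real-part/convexity step in the complex case, and settle $(3)\Rightarrow(2)$ and $(2)\Rightarrow(1)$ exactly as in Theorem~\ref{theo:charprojattain}. Your added care about the representation-independence of $G(T)$ (since $G$ annihilates $\ker J$) and the Hahn--Banach supporting functional making $(3)\Rightarrow(2)$ non-vacuous only makes explicit what the paper leaves implicit.
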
 

\begin{proof} Let $\widetilde{J} : X^* \pten Y / \ker J \longrightarrow \mathcal{N}(X, Y)$ be an isometric isomorphism which maps, according to the notation of Subsection \ref{subsect:tensorprod}, $z + \ker J$ to $L_z$. If we let $z_0 := \sum_{n=1}^\infty \lambda_n x_n^* \otimes y_n \in X^* \pten Y$, then $J(z_0) = T$ and $\|T\|_{N} = \| z_0 + \ker J\|$. Now assume (1) and let us prove (3). To this end, pick any $G\in (\ker J)^\perp$ with $\Vert G\Vert=1$ and $G(z_0+\ker J)=\|z_0+\ker J\|$.
Then, 
	\begin{eqnarray*}
	\sum_{n=1}^\infty \lambda_n = \| z_0 + \ker J \| =|G(z_0)| &=& \left|G\left( \sum_{n=1}^\infty \lambda_n x_n^* \otimes y_n \right) \right| \\
	&\leq& \sum_{n=1}^\infty \lambda_n |G(x_n^*)(y_n) | \\
	&\leq& \sum_{n=1}^\infty \lambda_n.
	\end{eqnarray*} 
Then, we have $|G(x_n^*)(y_n)| = 1$ for each $n \in \N$. Using a convexity argument, we get that $G(x_n^*) (y_n) = 1$ for every $n \in \N$. The other implications can be proved as in Theorem \ref{theo:charprojattain}. 
\end{proof} 

With Theorems \ref{theo:charprojattain} and \ref{theo:charnuclattain} in mind, we can now exhibit examples of nuclear operators which attain their nuclear norms.

\begin{example}
Let $X$, $Y$ be two reflexive Banach spaces such that $X^*$ or $Y$ has the approximation property (recall that, in this case, we have $X^*\pten Y = \mathcal{N}(X, Y)$). Assume further that $X^*$ is isometrically isomorphic to a subspace of $Y^*$. Take $G:X^*\longrightarrow Y^*$ to be a linear isometry and pick $(x_n^*)_n \subseteq S_{X^*}$. Now, for any $n\in\mathbb N$, notice that $\Vert G(x_n^*)\Vert=\Vert x_n^*\Vert=1$. Since $Y$ is reflexive, by using the James Theorem, we have that $G(x_n^*)\in S_{Y^*}$ attains its norm, so there exists $y_n\in S_Y$ so that $G(x_n^*)(y_n)=1$. Now, Theorem \ref{theo:charprojattain} (or Theorem \ref{theo:charnuclattain}) implies that, given any sequence $(\lambda_n)_n\subseteq (0, 1]$ with $\sum_{n=1}^\infty \lambda_n<\infty$, the nuclear operator
	$$T:=\sum_{n=1}^\infty \lambda_n x_n^*\otimes y_n \in \mathcal{N}(X, Y)$$
	attains its nuclear norm.
\end{example}

One may think that a norm-attaining nuclear operator should attain its norm (in the classical way). This is not true in general as observed below.

\begin{remark}\label{rema:examnuclearnotnormattain} Let $Y$ be an infinite dimensional strictly convex Banach space. Then, there is $T\in \NA_{\mathcal N}(c_0,Y)$ such that $T \not\in \NA(c_0, Y)$. Indeed, let $(y_n)_n \subseteq S_Y$ be linearly independent. For every $n\in\mathbb N$, find $y_n^*\in S_{Y^*}$ such that $y_n^*(y_n)=1$. Define $\phi:Y\longrightarrow \ell_\infty$ by $\phi(y):=(y_j^*(y))_{j=1}^{\infty} \in \ell_\infty$ for every $y \in Y$. Given $n\in\mathbb N$ we get that $\vert y_n^*(y)\vert\leq \Vert y\Vert$ since $\Vert y_n^*\Vert=1$ holds for every $n\in\mathbb N$. This implies that $\sup_{n\in\mathbb N} \vert y_n^*(y)\vert\leq \Vert y\Vert$, which proves that $\phi(y)\in \ell_\infty$ for every $y$ (i.e., $\phi$ is well defined) . In view of the linearity, we have that $\phi$ is continuous and $\Vert \phi\Vert\leq 1$. Furthermore, notice that $\phi(y_n)(e_n)=1$ holds for every $n\in\mathbb N$, where $(e_n)_{n}$ is the basis of $\ell_1$. This proves that the nuclear operator $T:c_0\longrightarrow Y$ defined by
	$$T=\sum_{n=1}^\infty \frac{1}{2^n}e_n\otimes y_n\in \ell_1\pten Y$$
	attains its nuclear norm by Theorem \ref{theo:charnuclattain}. Nevertheless, notice that $T$ is not a finite-rank operator and, consequently, $T$ does not belong to $\NA(c_0, Y)$ (see \cite[Lemma 2.2]{martinjfa} or the proof of \cite[Proposition 4]{linds2}).
\end{remark}

We prove next that on the the finite dimensional setting, {\it every} tensor is norm-attaining. Before presenting a proof of it, let us notice that since the convex hull of a compact set is compact when $X$ and $Y$ are both finite dimensional spaces, we have that $\cconv{(B_X \otimes B_Y)} = \co{(B_X \otimes B_Y)}$, which is a consequence of Minkowski-Carathéodory theorem (see, for instance, \cite[Exercises 1.57 and 1.58]{FHHMPZ}).

\begin{proposition} \label{fin-dim-tomas} Let $X, Y$ be finite dimensional Banach spaces. Then, every tensor attains its projective tensor norm. In other words, $\NA_{\pi}(X \pten Y) = X \pten Y.$
\end{proposition}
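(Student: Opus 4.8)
The plan is to exploit the description of the unit ball of $X \pten Y$ as the \emph{convex} hull (not merely the closed convex hull, thanks to the compactness remark preceding the statement) of the compact set $B_X \otimes B_Y$, together with Carathéodory's theorem, in order to produce a \emph{finite} norm-attaining representation of an arbitrary given tensor.

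First I would reduce to the case $\|z\|_{\pi} = 1$ by homogeneity: if $z = 0$ the claim is trivial, and otherwise it suffices to prove that $z/\|z\|_{\pi}$ attains its norm, since multiplying a norm-attaining representation by a positive scalar preserves norm-attainment. So I assume $\|z\|_{\pi} = 1$, which places $z$ on the unit sphere and hence in $B_{X \pten Y} = \co(B_X \otimes B_Y)$, the last equality being exactly the content recalled just before the proposition.

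Next, since $X \pten Y$ is finite dimensional of dimension $N := \dim X \cdot \dim Y$, I would apply Carathéodory's theorem to the point $z$ lying in the convex hull of $B_X \otimes B_Y \subseteq X \pten Y$. This produces scalars $\mu_1, \dots, \mu_m \geq 0$ with $m \leq N+1$ and $\sum_{k=1}^m \mu_k = 1$, together with elementary tensors $x_k \otimes y_k$ where $x_k \in B_X$ and $y_k \in B_Y$, such that $z = \sum_{k=1}^m \mu_k \, x_k \otimes y_k$.

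Finally I would check that this finite representation is norm-attaining in the sense of Definition \ref{defi:normattainnuc}(1). Rewriting $z = \sum_{k=1}^m (\mu_k x_k) \otimes y_k$, the associated sum is $\sum_{k=1}^m \|\mu_k x_k\| \, \|y_k\| = \sum_{k=1}^m \mu_k \|x_k\| \, \|y_k\| \leq \sum_{k=1}^m \mu_k = 1 = \|z\|_{\pi}$, using $\|x_k\| \leq 1$ and $\|y_k\| \leq 1$. On the other hand, every representation of $z$ has its associated sum bounded below by $\|z\|_{\pi}$, directly from the definition of the projective norm as an infimum over representations. These two inequalities force equality, so this representation attains the projective norm, and $z \in \NA_{\pi}(X \pten Y)$. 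There is no serious obstacle here: the only point that needs care is the passage from mere membership in the convex hull to a norm-attaining representation in the precise sense of the definition, which is settled by the infimum characterization of $\|\cdot\|_{\pi}$, while the compactness required to write $\co$ in place of $\overline{\co}$ has already been recorded before the statement.
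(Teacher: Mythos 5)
Your proof is correct and takes essentially the same route as the paper: both arguments use the compactness of $B_X \otimes B_Y$ in finite dimensions (via the Minkowski--Carathéodory theorem) to replace $\cconv(B_X \otimes B_Y)$ by $\co(B_X \otimes B_Y)$, write a norm-one tensor $z$ as a finite convex combination of elementary tensors, and recognize this as a norm-attaining representation. Your closing verification that $\sum_{k} \mu_k \|x_k\| \, \|y_k\| = \|z\|_{\pi}$, by squeezing between the infimum definition of $\|\cdot\|_{\pi}$ and the bound $\sum_k \mu_k = 1$, merely makes explicit a step the paper leaves implicit.
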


\begin{proof} Let $z \in X \pten Y$ with $\|z\|_{\pi} = 1$ be given and let us prove that $z \in \NA_{\pi}(X \pten Y)$. As we have mentioned before, since $X$ and $Y$ are finite dimensional Banach spaces, $B_X \otimes B_Y$ is compact in $X \pten Y$ and this implies that $B_{X \pten Y} = \overline{\co} (B_X \otimes B_Y) = \co (B_X \otimes B_Y)$. Therefore, $z$ can be written as a finite convex combination of elements in $B_X \otimes B_Y$, i.e.,
	\begin{equation*}
	z = \sum_{j=1}^n \lambda_j x_j \otimes y_j \ \ \ \mbox{with} \ \ \ \sum_{j=1}^n \lambda_j = 1,
	\end{equation*}
	where $\lambda_j \in \R^+$, $x_j \in B_X$, and $y_j \in B_Y$ for $j=1, \ldots, n$, that is, $z$ is norm-attaining.
\end{proof}

Let us notice that in Remark \ref{rema:examnuclearnotnormattain}, we have constructed by hand a nuclear operator from $c_0$ into a particular $Y$ which attains its nuclear norm. It turns out that {\it every} nuclear operator from $c_0$ into {\it any} Banach space $Y$ attains its nuclear norm. This should be compared to the fact that, in the classical theory, whenever $X$ is a Banach space such that $\NA(X, Y) = \mathcal{L}(X, Y)$ for some $Y \not= \{0\}$, $X$ must be reflexive (this is an application of James theorem). In other words, this result is no longer true in the context of nuclear operators.

\begin{proposition}\label{exam:nucopec0attain} Let $Y$ be a Banach space. Then, 
\begin{itemize} 
\item[(a)] every nuclear operator $T\in \mathcal N(c_0, Y)$ attains its nuclear norm. Equivalently, 
\item[(b)] every element in $\ell_1\pten Y$ attains its projective norm. 
\end{itemize} 
\end{proposition}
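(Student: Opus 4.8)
The plan is to prove statement (b) directly, since it is equivalent to (a) via the identification $\mathcal{N}(c_0,Y) = \ell_1 \pten Y$ (here $c_0^* = \ell_1$, and $\ell_1$ has the approximation property, so the two norm-attainment concepts coincide). So fix $z \in \ell_1 \pten Y$ and write it in the normalized form $z = \sum_{n=1}^\infty \lambda_n u_n \otimes y_n$ with $\lambda_n \in \R^+$, $u_n \in S_{\ell_1}$, $y_n \in S_Y$, and $\sum_n \lambda_n$ arbitrarily close to $\|z\|_\pi$. By Theorem \ref{theo:charprojattain}, to show $z$ attains its projective norm it suffices to produce a single operator $G \in \mathcal{L}(\ell_1, Y^*)$ with $\|G\| = 1$ satisfying $G(u_n)(y_n) = 1$ for every $n$.

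The key structural feature I would exploit is that $\ell_1 = c_0^*$, so an operator $G \in \mathcal{L}(\ell_1, Y^*)$ is determined by where it sends the canonical unit vectors $(e_k)_k$ of $\ell_1$, and $\|G\| = \sup_k \|G(e_k)\|$ because the unit ball of $\ell_1$ is the closed absolutely convex hull of $\{e_k\}$. This reduces the problem to the following: for each coordinate $k$, I need to choose a vector $\psi_k := G(e_k) \in Y^*$ with $\|\psi_k\| \le 1$ such that, simultaneously, every equation $G(u_n)(y_n) = 1$ holds. First I would handle one tensor term at a time: for a fixed normalized $u_n = \sum_k a_{n,k} e_k \in S_{\ell_1}$ (so $\sum_k |a_{n,k}| = 1$) and the corresponding $y_n \in S_Y$, the condition $G(u_n)(y_n) = 1$ reads $\sum_k a_{n,k}\, \psi_k(y_n) = 1$, and since $|\psi_k(y_n)| \le 1$, this forces $\psi_k(y_n) = \operatorname{sign}(a_{n,k})$ (suitably interpreted in the complex case) for every $k$ in the support of $u_n$. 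Thus the natural candidate is to pick, for each $k$, a norm-one functional $\psi_k \in S_{Y^*}$ that norms all the relevant $y_n$ appropriately.

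The clean way to make this uniform is to construct $\psi_k$ using a Hahn–Banach-type selection. For each index $k$, consider the set of terms $n$ with $a_{n,k} \neq 0$; I would like a norm-one functional $\psi_k$ with $\psi_k(y_n) = \operatorname{sign}(a_{n,k})$ for all such $n$. Following the spirit of Remark \ref{rema:examnuclearnotnormattain}, one picks for each $y_n$ a norming functional $y_n^* \in S_{Y^*}$ with $y_n^*(y_n) = 1$, and assembles the $\psi_k$ from these; the point is that the coordinate functionals on $\ell_1$ decouple the constraints, so no single $\psi_k$ is over-determined in a way that violates $\|\psi_k\| \le 1$. Defining $G$ on the canonical basis by these $\psi_k$ and extending, one gets $\|G\| = \sup_k \|\psi_k\| \le 1$, and by construction $G(u_n)(y_n) = 1$ for all $n$; since each $y_n$ is genuinely normed this also gives $\|G\| = 1$. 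Theorem \ref{theo:charprojattain} then yields $z \in \NA_\pi(\ell_1 \pten Y)$.

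The main obstacle I anticipate is the consistency of the coordinate-wise choices: a single $e_k$ may appear with nonzero coefficient in infinitely many of the $u_n$, and I must choose one functional $\psi_k \in B_{Y^*}$ that simultaneously satisfies $\psi_k(y_n) = \operatorname{sign}(a_{n,k})$ for all those $n$ while keeping $\|\psi_k\| \le 1$. Verifying that these constraints are mutually compatible (so that a norm-one $\psi_k$ exists) is the crux; I expect this is where the special geometry of $\ell_1$ as a predual is essential, and where one must be careful in the complex case to replace signs by unimodular scalars and to use a convexity argument as in the proof of Theorem \ref{theo:charnuclattain} to pass from $|G(u_n)(y_n)| = 1$ to $G(u_n)(y_n) = 1$.
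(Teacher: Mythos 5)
Your proposal stalls at precisely the step you flag as ``the crux'', and that gap is not a technicality: it is the whole content of the proposition. You begin with an \emph{arbitrary} representation $z=\sum_n \lambda_n u_n\otimes y_n$ with $\sum_n\lambda_n$ merely close to $\|z\|_\pi$, and then seek $G\in\mathcal{L}(\ell_1,Y^*)$, $\|G\|=1$, with $G(u_n)(y_n)=1$ for all $n$. But by Theorem \ref{theo:charprojattain} such a $G$ exists for a given representation \emph{if and only if} that representation is already optimal, i.e.\ $\sum_n\lambda_n=\|z\|_\pi$; when the inequality is strict, the constraints you impose are unsatisfiable from the outset. Concretely, the coordinate-wise conditions $\psi_k(y_n)=\operatorname{sign}(a_{n,k})$ are mutually incompatible for a generic representation: already in $Y=\R$, the (non-optimal) representation $z=\lambda_1\, e_1\otimes y+\lambda_2\,(-e_1)\otimes y$ of $z=(\lambda_1-\lambda_2)\,e_1\otimes y$ would force $\psi_1(y)=1$ and $\psi_1(y)=-1$ simultaneously. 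So the compatibility you hoped to verify is false in general; it can only hold after the representation has been replaced by a better one, and producing that better representation is exactly what needs to be proved. Your appeal to ``the special geometry of $\ell_1$ as a predual'' names the right ingredient but never deploys it.

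The missing idea is to regroup the tensor by coordinates of $\ell_1$ rather than keep the given terms: writing $u_n=\sum_k a_{n,k}e_k$, set $x_k:=\sum_n \lambda_n a_{n,k}\,y_n\in Y$ (the series converges absolutely since $\sum_n\lambda_n\sum_k|a_{n,k}|<\infty$), so that $z=\sum_k e_k\otimes x_k$ with $\sum_k\|x_k\|\le\sum_n\lambda_n<\infty$. Now the constraints genuinely decouple: by Hahn--Banach choose, for each $k$ \emph{independently}, $\psi_k\in S_{Y^*}$ with $\psi_k(x_k)=\|x_k\|$, and define $G(e_k):=\psi_k$, so $\|G\|\le 1$ and $G(z)=\sum_k\|x_k\|$; hence $\sum_k\|x_k\|\le\|z\|_\pi\le\sum_k\|e_k\|\|x_k\|=\sum_k\|x_k\|$, and the coordinate representation is norm-attaining. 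This is, modulo packaging, the paper's own proof: there one simply invokes the onto isometry $\Phi:\ell_1(Y)\longrightarrow\ell_1\pten Y$, $\Phi((x_k)_k)=\sum_k e_k\otimes x_k$, from \cite[Lemma 2.6]{rya}, which encodes exactly the identity $\|z\|_\pi=\sum_k\|x_k\|$ without any dual operator at all. So your strategy of building $G$ and citing Theorem \ref{theo:charprojattain} is salvageable, but only after the decisive move --- passing to the canonical representation along the unit vector basis --- which your proposal never makes.
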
	

\begin{proof} Indeed, in the last part of \cite[Lemma 2.6]{rya}, it is proved that $\Phi: \ell_1(Y)\longrightarrow \ell_1\pten Y$ given by
\begin{equation*}
\Phi((x_n)_n)=\sum_{n=1}^\infty e_n\otimes x_n
\end{equation*}
is an onto linear isometry, where $(e_n)_n$ is the basis of $\ell_1$ (in fact, $\Phi=J^{-1}$ in the proof given there). Let $T\in \mathcal N(c_0,Y)=\ell_1\pten Y$ be given. By the surjectivity of $\Phi$, we can find an element $(x_n)_n \in \ell_1(Y)$ such that $\Phi((x_n)_n)=T$. Consequently, $T=\sum_{n=1}^\infty e_n\otimes x_n$. Then,
\begin{equation*} 
\Vert T\Vert_N=\Vert \Phi((x_n)_n)\Vert=\Vert (x_n)_n\Vert=\sum_{n=1}^\infty \Vert x_n\Vert=\sum_{n=1}^\infty \Vert e_n\Vert \Vert x_n\Vert.
\end{equation*}
This proves that $T$ attains its nuclear norm, as desired.
\end{proof}

\begin{remark}
Notice that Proposition \ref{exam:nucopec0attain} is also true for $c_0(I)$ and $\ell_1(I)$ for any arbitrary index set $I$ (see \cite[Example 2.6]{rya}).
\end{remark}

In the infinite dimensional case, besides the nuclear operators from $c_0$ into an arbitrary Banach space $Y$, we have that every nuclear operator on a complex Hilbert space attains its nuclear norm. Although we prove this result for nuclear operators (justified by the fact that we will be dealing with eigenvalues and Schatten classes), we also get that every tensor in $H\pten H$ attains its projective norm as every Hilbert space $H$ has the approximation property.

\begin{proposition}\label{prop:hilbertnuclearallNA}
Let $H$ be a complex Hilbert space. Then, every nuclear operator $T\in \mathcal N(H, H)$ attains its nuclear norm.
\end{proposition}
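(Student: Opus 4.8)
The plan is to reduce everything to Theorem~\ref{theo:charprojattain} (equivalently Theorem~\ref{theo:charnuclattain}) by producing, for a suitable canonical representation of $T$, a norm-one functional satisfying condition (2). Since every Hilbert space has the approximation property, we have $\mathcal{N}(H,H)=H^*\pten H$ isometrically, so $\ker J=\{0\}$ and it suffices to work with the tensor-product characterization, taking $X=H^*$ and $Y=H$ so that $(H^*\pten H)^*=\mathcal{L}(H^*,H^*)$.

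First I would recall that a nuclear operator on a Hilbert space is exactly a trace-class operator: applying the spectral theorem to the positive compact operator $|T|=(T^*T)^{1/2}$ and combining with the polar decomposition produces orthonormal systems $(e_n)$ and $(f_n)$ in $H$ together with singular values $s_n>0$ satisfying $\sum_n s_n<\infty$ and
\[
T=\sum_{n=1}^{\infty} s_n \langle \cdot, e_n\rangle\, f_n .
\]
Putting $x_n^*=\langle\cdot,e_n\rangle\in S_{H^*}$, $y_n=f_n\in S_H$, and $\lambda_n=s_n$, this is an absolutely convergent normalized representation of exactly the form required by the characterization theorems, with $\sum_n\lambda_n<\infty$ (the case $T=0$ is trivial).

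The crux is to construct $G$ witnessing condition (2). I would let $U\colon H\to H$ be the partial isometry determined by $Ue_n=f_n$ and $U\equiv 0$ on $(\overline{\operatorname{span}}\{e_n\})^{\perp}$, and then transfer it to the dual through the conjugate-linear isometric Riesz identification $\varphi\mapsto h_\varphi$, where $\varphi=\langle\cdot,h_\varphi\rangle$, by defining $G\in\mathcal{L}(H^*,H^*)$ via $G(\varphi)=\langle\cdot,Uh_\varphi\rangle$. The two conjugate-linear steps compose with the linear $U$ so that $G$ is genuinely (complex-)linear, and $\|G(\varphi)\|=\|Uh_\varphi\|\le\|h_\varphi\|=\|\varphi\|$ forces $\|G\|=1$. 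Finally $G(x_n^*)=G(\langle\cdot,e_n\rangle)=\langle\cdot,Ue_n\rangle=\langle\cdot,f_n\rangle$, whence $G(x_n^*)(y_n)=\langle f_n,f_n\rangle=1$ for every $n$. Having verified (2), Theorem~\ref{theo:charprojattain} yields (1), i.e. $\|T\|_N=\sum_n s_n$ with the displayed Schmidt representation norm-attaining, so $T\in\NA_{\mathcal N}(H,H)$.

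I expect the only delicate point to be the bookkeeping of conjugate-linearity in the complex Riesz identification—ensuring that $G$ emerges linear rather than conjugate-linear—together with the verification $\|G\|=1$, for which the orthonormality of both $(e_n)$ and $(f_n)$ is precisely what guarantees that $U$ is a partial isometry of norm one. Everything else is a routine consequence of the already-established characterization.
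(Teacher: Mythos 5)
Your proof is correct, and it diverges from the paper's at the decisive step. Both arguments begin from the same canonical Schmidt decomposition $T=\sum_n s_n\langle\cdot,e_n\rangle f_n$ with orthonormal systems $(e_n)$, $(f_n)$ (the paper cites \cite[Theorem 2.1]{GGK}; you obtain it from the spectral theorem for $|T|$ plus polar decomposition). The paper then finishes in one line by quoting the classical identity $\|T\|_N=\sigma_1(T)=\sum_n s_n$ from \cite[pages 96--97]{GGK}, which already contains the norm-attainment. You instead re-derive that identity inside the paper's own framework: you build the norm-one witness $G\in\mathcal{L}(H^*,H^*)=(H^*\pten H)^*$ from the partial isometry $Ue_n=f_n$, correctly observing that the two conjugate-linear Riesz identifications sandwiching the linear $U$ compose to a genuinely complex-linear $G$ with $\|G\|\leq 1$ and $G(x_n^*)(y_n)=\langle f_n,f_n\rangle=1$, and then invoke Theorem \ref{theo:charprojattain} (2)$\Rightarrow$(1), the passage to $\mathcal{N}(H,H)$ being licensed by the approximation property of $H$, so $\ker J=\{0\}$. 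What your route buys: it is self-contained relative to this paper (it actually proves the nontrivial inequality $\|T\|_N\geq\sum_n s_n$ rather than citing it), it showcases the intended use of the characterization theorem, and it is insensitive to the scalar field, so it covers real Hilbert spaces as well, whereas the paper's citation is framed in the complex setting. What the paper's route buys is brevity. The one residual reliance in your write-up is the opening assertion that nuclear operators on $H$ are trace class, i.e.\ that $\sum_n s_n<\infty$; this is itself part of the classical theory you are otherwise avoiding, though it admits a short direct proof (for any nuclear representation $T=\sum_k u_k^*\otimes v_k$ and any $N$, Bessel's inequality and Cauchy--Schwarz give $\sum_{n\leq N}s_n=\sum_{n\leq N}\langle Te_n,f_n\rangle\leq\sum_k\|u_k^*\|\|v_k\|$, whence $\sum_n s_n\leq\|T\|_N$), which you could add to make the argument fully self-contained.
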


\begin{proof}
	Note that $T\in \mathcal{N}(H, H)$ can be written as
\begin{equation*} 
	T = \sum_{j=1}^{n_0} \lambda_j \langle \cdot, x_j\rangle y_j,
\end{equation*} 
where $n_0\in \mathbb{N}\cup \{ \infty \}$, $(\lambda_j)_j$ is the sequence of nonzero eigenvalues of $|T|=(T^* T)^{\frac{1}{2}}$, and $(x_j)_j$ and $(y_j)_j$  are orthonormal systems in $H$ (see \cite[Theorem 2.1]{GGK}). On the other hand, it is well-known that $\| T \|_N = \sigma_1(T)=\sum_{j=1}^{n_0} \lambda_j$, where $\sigma_1(\cdot)$ is the Schatten 1st norm (see, for example, \cite[pages 96-97]{GGK}). This completes the proof.
\end{proof}

Taking into account Propositions \ref{fin-dim-tomas}, \ref{exam:nucopec0attain} and \ref{prop:hilbertnuclearallNA}, it is natural to ask whether or not the equality $\NA_{\mathcal{N}}(X,Y)=\mathcal N(X,Y)$ (or $\NA_{\pi} (X\pten Y)=X\pten Y$) holds for all Banach spaces $X$ and $Y$. We will give a negative answer for this problem by proving that if this happens, then the set of norm-attaining bilinear forms which attain their norms is dense in $\mathcal{B}(X \times Y)$. From our point of view, this shows that the study of norm-attaining nuclear operators is not a trivial task.

\begin{lemma} \label{bilinear-as-functional} Let $X, Y$ be Banach spaces. If $B \in \mathcal{B}(X \times Y) = (X \pten Y)^*$ attains its norm (as a functional) at an element of $\NA_{\pi}(X \pten Y)$, then $B \in \NA(X \times Y)$.
\end{lemma}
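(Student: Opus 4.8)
The plan is to reduce the statement directly to the characterization in Theorem~\ref{theo:charprojattain}. Without loss of generality I would assume $\|B\| = 1$, and let $z \in \NA_{\pi}(X \pten Y)$ with $\|z\|_{\pi} = 1$ be the point at which $B$ attains its norm as a functional, so that $|B(z)| = 1$. Under the isometric identification $\mathcal{B}(X \times Y) = (X \pten Y)^*$, I view $B$ as a norm-one operator $G \in \mathcal{L}(X, Y^*)$ with $G(x)(y) = B(x,y)$. The first genuine move is to multiply $B$ by a suitable unimodular scalar (a sign, in the real case), obtaining a functional still of norm one but now satisfying $G(z) = 1 = \|z\|_{\pi}$; this adjustment affects neither $\|B\|$ nor whether $B$ attains its bilinear norm, since both depend only on moduli.

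Next, because $z \in \NA_{\pi}(X \pten Y)$, Definition~\ref{defi:normattainnuc} supplies a norm-attaining representation of $z$, which after discarding null terms and normalizing I write as $z = \sum_{n} \lambda_n u_n \otimes v_n$ with $\lambda_n > 0$, $u_n \in S_X$, $v_n \in S_Y$, and $\sum_{n} \lambda_n = \|z\|_{\pi} = 1$. This is exactly the setting of Theorem~\ref{theo:charprojattain}, and condition~(1) holds for this representation. Applying the implication (1)$\Rightarrow$(3) to the norm-one functional $G$, which by construction satisfies $G(z) = \|z\|_{\pi}$, I obtain $G(u_n)(v_n) = 1$ for every $n$.

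It then remains to translate this back into bilinear language. Since $\|z\|_{\pi} = 1$ forces $z \neq 0$, the representation has at least one term, and for any index $n$ the identity $G(u_n)(v_n) = 1$ reads $|B(u_n, v_n)| = 1 = \|B\|$ with $(u_n, v_n) \in S_X \times S_Y$; hence $B \in \NA(X \times Y)$. The proof is short precisely because all the work is already carried out in Theorem~\ref{theo:charprojattain}, so I do not expect a serious obstacle. The only point demanding care is the phase alignment in the first paragraph: I must arrange $G(z)$ to equal the real number $\|z\|_{\pi}$ exactly, and not merely in modulus, so that the hypothesis ``$G(z) = \|z\|_{\pi}$'' of condition~(3) is literally met.
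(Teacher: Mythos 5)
Your proposal is correct and follows essentially the same route as the paper: write down a norm-attaining representation $z=\sum_n \lambda_n u_n\otimes v_n$ and apply the implication (1)$\Rightarrow$(3) of Theorem~\ref{theo:charprojattain} to the norm-one functional attaining its norm at $z$, concluding $B(u_n,v_n)=1$ for every $n$. The only difference is that you make explicit the normalizations (unimodular phase adjustment so that $G(z)=\|z\|_\pi$ exactly, discarding null terms) that the paper's proof leaves implicit by assuming $B(z)=1$ from the start.
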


\begin{proof} Let $B \in \mathcal{B}(X \times Y) = (X \pten Y)^*$ and $z \in S_{X \pten Y}$ with $z = \sum_{n=1}^{\infty} \lambda_n x_n \otimes y_n \in \NA_{\pi}(X \pten Y)$ be such that $B(z) = 1$, where $\lambda_n \in \R^+$, $x_n \in S_X$, and $y_n \in S_Y$. By Theorem \ref{theo:charprojattain}, $B(x_n, y_n) = 1$ for every $n \in \N$. In particular, $B \in \NA(X \times Y)$.	
\end{proof}

\begin{proposition} \label{prop:allprojattain} Let $X, Y$ be Banach spaces. If every element in $X \pten Y$ attains its projective norm, then the set of all bilinear forms on $X \times Y$ which attain their norms is dense in $\mathcal{B}(X \times Y)$. In other words, if $\NA_{\pi} (X\pten Y)=X\pten Y$, then
\begin{equation*} 
\overline{\NA(X\times Y)}^{\|\cdot\|} = \mathcal{B}(X \times Y).
\end{equation*} 
\end{proposition}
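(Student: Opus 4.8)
The plan is to combine the classical Bishop--Phelps theorem, applied to the single Banach space $X \pten Y$, with Lemma \ref{bilinear-as-functional}. Recall the isometric identification $\mathcal{B}(X \times Y) = (X \pten Y)^*$, so that bilinear forms on $X \times Y$ are exactly the continuous linear functionals on the Banach space $X \pten Y$, and norm-density in one space is norm-density in the other. Since Bishop--Phelps guarantees that the functionals attaining their norm on $B_{X \pten Y}$ are dense in $(X \pten Y)^*$, it suffices to check that every such functional already lies in $\NA(X \times Y)$; the hypothesis $\NA_\pi(X \pten Y) = X \pten Y$ is precisely what makes this passage automatic.

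Concretely, I would first fix a functional $B \in \mathcal{B}(X \times Y) = (X \pten Y)^*$ that attains its norm as a functional, say $\|B\| = 1$ and $|B(z)| = 1$ for some $z \in S_{X \pten Y}$. After multiplying $z$ by a suitable unimodular scalar (trivial in the real case, a single phase factor in the complex case) I may assume $B(z) = 1$. By the standing assumption, every element of $X \pten Y$ attains its projective norm, so in particular $z \in \NA_\pi(X \pten Y)$. Thus $B$ attains its norm (as a functional) at an element of $\NA_\pi(X \pten Y)$, and Lemma \ref{bilinear-as-functional} immediately gives $B \in \NA(X \times Y)$.

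It then follows that the Bishop--Phelps set of norm-attaining functionals on $X \pten Y$ is contained in $\NA(X \times Y)$, and since that set is dense in $(X \pten Y)^* = \mathcal{B}(X \times Y)$, I conclude $\overline{\NA(X \times Y)}^{\|\cdot\|} = \mathcal{B}(X \times Y)$, as claimed.

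I do not anticipate a genuine obstacle here: the argument is essentially a one-line consequence of Lemma \ref{bilinear-as-functional} once Bishop--Phelps is invoked on $X \pten Y$. The only points requiring minor care are the scalar normalization bringing $B(z)$ to $1$ (so that the hypothesis of Lemma \ref{bilinear-as-functional} is met verbatim in the complex setting), and making explicit that the identification $\mathcal{B}(X \times Y) = (X \pten Y)^*$ is isometric, so that the Bishop--Phelps density transfers to $\mathcal{B}(X \times Y)$ without distortion.
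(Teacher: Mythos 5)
Your proof is correct and follows essentially the same route as the paper: apply the Bishop--Phelps theorem to the Banach space $X \pten Y$, use the isometric identification $\mathcal{B}(X \times Y) = (X \pten Y)^*$, and conclude via Lemma \ref{bilinear-as-functional} together with the hypothesis $\NA_{\pi}(X \pten Y) = X \pten Y$. Your added remark on the unimodular normalization of $B(z)$ in the complex case is a harmless refinement of the same argument.
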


\begin{proof} Let $\varepsilon>0$. Let $B \in \mathcal{B}(X \times Y)=(X\pten Y)^*$ with $\|B\| = 1$. By the Bishop-Phelps theorem, for $X \pten Y$, there are $B_0 \in (X \pten Y)^*$ with $\|B_0\| = 1$ and $z_0 \in S_{X \pten Y}$ such that $B_0(z_0) = 1$ and $\| B_0 - B\| < \eps$. 	By hypothesis, $z_0\in \NA_{\pi}(X,Y)$ attains its projective norm and by Lemma \ref{bilinear-as-functional} we have that $B_0 \in \NA(X \times Y)$. Since $\|B_0 - B\|<\eps$, we are done. 
\end{proof}

Proposition \ref{prop:allprojattain} yields the following consequence.

\begin{corollary}\label{cor:projnotnormattain} Let $X, Y$ be Banach spaces. Suppose that every element in $X \pten Y$ attains its projective norm. Then, the set of norm-attaining operators from $X$ into $Y^*$ is dense in $\mathcal{L}(X, Y^*)$. In other words, if $\NA_{\pi} (X\pten Y)=X\pten Y$, then 
\begin{equation*} 
\overline{\NA(X, Y^* )}^{\|\cdot\|} = \mathcal{L}(X, Y^*).
\end{equation*} 
\end{corollary}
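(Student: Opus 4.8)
The hypothesis is exactly the one appearing in Proposition \ref{prop:allprojattain}, so the natural strategy is to invoke that proposition to get denseness of $\NA(X \times Y)$ in $\mathcal{B}(X \times Y)$, and then transport this denseness across the canonical isometric isomorphism $\mathcal{B}(X \times Y) = \mathcal{L}(X, Y^*)$ recalled in Subsection \ref{subsect:tensorprod}. Concretely, let $\Psi \colon \mathcal{B}(X \times Y) \longrightarrow \mathcal{L}(X, Y^*)$ be this isometric isomorphism, which assigns to a bilinear form $B$ the operator $T_B$ defined by $T_B(x)(y) = B(x,y)$ for all $x \in X$, $y \in Y$. Since $\Psi$ is an onto isometry, it maps dense subsets to dense subsets; hence it suffices to show that $\Psi$ carries $\NA(X \times Y)$ \emph{into} $\NA(X, Y^*)$.

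\textbf{The key step.} I would verify the inclusion $\Psi\left( \NA(X \times Y) \right) \subseteq \NA(X, Y^*)$ directly. Suppose $B \in \NA(X \times Y)$, so there is $(x_0, y_0) \in S_X \times S_Y$ with $|B(x_0, y_0)| = \|B\|$. Because $\Psi$ is an isometry we have $\|T_B\| = \|B\|$, and therefore
\[
\|T_B\| = |B(x_0, y_0)| = |T_B(x_0)(y_0)| \leq \sup_{y \in S_Y} |T_B(x_0)(y)| = \|T_B(x_0)\| \leq \|T_B\|\,\|x_0\| = \|T_B\|.
\]
Thus $\|T_B(x_0)\| = \|T_B\|$ with $x_0 \in S_X$, which says precisely that $T_B = \Psi(B)$ attains its norm. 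This is the only place where the analysis of the two norm-attainment notions enters, and it reflects the one-directional implication already announced after Definition \ref{defi:normattainnuc} (that denseness of $\NA(X \times Y)$ implies that of $\NA(X, Y^*)$, but not conversely).

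\textbf{Conclusion.} Combining the two ingredients finishes the argument: by Proposition \ref{prop:allprojattain} the hypothesis $\NA_{\pi}(X \pten Y) = X \pten Y$ gives $\overline{\NA(X \times Y)}^{\|\cdot\|} = \mathcal{B}(X \times Y)$, so $\Psi\left(\NA(X \times Y)\right)$ is dense in $\mathcal{L}(X, Y^*)$ by the isometry; since this image sits inside $\NA(X, Y^*)$ by the key step, we conclude $\overline{\NA(X, Y^*)}^{\|\cdot\|} = \mathcal{L}(X, Y^*)$.

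\textbf{Expected obstacle.} There is no serious obstacle here: the result is essentially a transfer of Proposition \ref{prop:allprojattain} through an isometric identification. The only point requiring genuine (if elementary) care is the inclusion in the key step, namely checking that a norm-attaining bilinear form induces a norm-attaining operator; the converse inclusion fails in general, so one must be careful to use the implication only in the correct direction, and it is precisely this asymmetry that makes the statement about $\mathcal{L}(X, Y^*)$ (rather than about $\mathcal{B}(X \times Y)$) the right conclusion to record.
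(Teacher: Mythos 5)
Your proposal is correct and follows exactly the route the paper intends: the corollary is stated as an immediate consequence of Proposition \ref{prop:allprojattain}, transported through the isometric identification $\mathcal{B}(X \times Y) = \mathcal{L}(X, Y^*)$ via the (correct, one-directional) observation that a norm-attaining bilinear form induces a norm-attaining operator --- the very implication the paper announces after Definition \ref{defi:normattainnuc}. Your explicit verification of the key inclusion $\Psi\left(\NA(X \times Y)\right) \subseteq \NA(X, Y^*)$ is the only detail the paper leaves implicit, and it is carried out correctly.
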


Now, by using Lemma \ref{bilinear-as-functional}, Proposition \ref{prop:allprojattain}, and Corollary \ref{cor:projnotnormattain}, we can get examples of pairs of Banach spaces $(X, Y)$ such that there are elements in the projective tensor product $X \pten Y$ which {\it do not} attain their projective norms.

\begin{example} \label{exa:bilinearnegative} There are elements $z \in X \pten Y$ such that $z \notin \NA_{\pi}(X \pten Y)$ in the following cases. 
	\begin{enumerate}
		
		\item[(a)]\footnote{The authors are thankful to the referee who provided us this example.} When $X = L_1(\mathbb{T})$, where the unit circle $\mathbb{T}$ is equipped with the Haar measure $m$, and $Y$ is the two-dimensional Hilbert space. Indeed, it is shown in \cite[Remark 5.7.(2)]{godsurvey} that there is $T \in \mathcal{B}(X \times Y)$ which attains its norm as a linear functional on $X \pten Y$ but not as an operator from $X$ into $Y^*$ (nor the more as a bilinear form on $X \times Y$). By Lemma \ref{bilinear-as-functional}, it follows that $\NA_{\pi}(X \times Y) \not= X \pten Y$.   
		
		\vspace{0.2cm}
		
		\item[(b)] When $X$ is $L_1[0,1]$ and $Y^*$ is a strictly convex Banach space without the Radon-Nikod\'ym property. Indeed, by \cite[Theorem 3]{U}, the set $\NA(L_1[0,1], Y^*)$ is not dense in $\mathcal{L}(L_1[0,1], Y^*)$.  Let us notice that this also shows that Proposition \ref{exam:nucopec0attain} is no longer true if we consider an $L_1(\mu)$-space for a non-purely atomic measure $\mu$.
		
		\vspace{0.2cm}
		
		\item[(c)] When $Y = \ell_p$ for $1 < p < \infty$ and $X$ is the Banach space constructed by Gowers. Indeed, there is a Banach space $G$ such that $\NA(G \times \ell_p)$ is not dense in $\mathcal{B}(G \times \ell_p)$ (see \cite[Theorem, page 149]{G}). We should notice that the unit ball of $G$ lacks extreme points. This result should be compared to the fact that, if $X$ is reflexive and $Y$ is any Banach space, then $\mathcal{K}(X,Y)\subseteq \NA(X,Y)$.
		
		
		
		\vspace{0.2cm}
		
		\item[(d)] When $X$ and $Y$ are both $L_1[0,1]$. Indeed, \cite[Theorem 3]{Choi} shows that the set $\NA(L_1[0,1] \times L_1[0,1])$ is not dense in $\mathcal{B}(L_1[0,1] \times L_1[0,1])$.
		
		
	\end{enumerate}
\end{example}

Let us finish this section by highlighting two observations. 

\begin{remark}\label{rem:bilinearnegative}
Notice that if we weaken the hypothesis in Proposition \ref{prop:allprojattain} by assuming that $\NA_{\pi}(X\pten Y)$ is dense in $X\pten Y$, the result does {\it not} remain true. Indeed, by using Example \ref{exa:bilinearnegative}.(c), we know that $\NA(L_1[0,1] \times L_1[0,1])$ is not dense in $\mathcal{B}(L_1[0,1] \times L_1[0,1])$, but we will see in Section \ref{section:denseness} that the set of all tensors which attain their projective norm on $L_1[0,1] \pten L_1[0,1]$ is dense in $L_1[0,1] \pten L_1[0,1]$ (see Theorem \ref{theo:propertyP} and Example \ref{examples:propertyP}.(b)). Nevertheless, we will always have that $\operatorname{NA}(X, Y^*)\cap B_{\mathcal{L}(X, Y^*)}$ is $w^*$-dense in $B_{\mathcal{L}(X, Y^*)}$ under this hypothesis (see Remark \ref{remark:NAnormante}).
\end{remark}

\begin{remark} Let $Y$ be a finite dimensional Banach space. Then, $\NA(Y, Z)=\mathcal L(Y, Z)$ for every Banach space $Z$ by using the compactness of the unit ball of $Y$. Let us suppose for a second that the same holds for nuclear operators. Then, $\NA_{\mathcal{N}} (Y, Z) = \mathcal{N}(Y, Z)$ for every Banach space $Z$. Since $Y$ is finite dimensional, it has the approximation property and then we would have that $\NA_{\pi}(Z \pten Y^*) = Z \pten Y^*$ for every Banach space $Z$. By Corollary \ref{cor:projnotnormattain}, we would have that the set $\NA(Z, Y)$ is dense in $\mathcal{L}(Z, Y)$ for every Banach space $Z$, which would imply that $Y$ has property B of Lindenstrauss (solving positively \cite[Question 9]{martinjfa}). Therefore, it is natural to wonder whether every nuclear operator $T:Y\longrightarrow Z$ attain its nuclear norm for every Banach space $Z$ whenever $Y$ is finite dimensional. This is {\it not} the case due to Example \ref{exa:bilinearnegative}.(a)\footnote{It is worth mentioning that this question was posed by the authors in a preliminary version of this manuscript; they thank the anonymous referee who answered it negatively.} by taking $Z = \L_1(\mathbb{T})$ and $Y = \ell_2^2$, the Euclidean plane (see \cite[Remark 5.7.(2)]{godsurvey}).

\end{remark}

\section{Denseness of nuclear operators and tensors which attain their norms} \label{section:denseness} 

Here we will be focusing on examples of Banach spaces $X$ and $Y$ such that the sets $\NA_{\pi}(X \pten Y)$ and $\NA_{\mathcal{N}}(X, Y)$ are dense in norm in $X \pten Y$ and $\mathcal{N}(X, Y)$, respectively. As we have seen in the previous section, there are many examples of projective tensor products where we can guarantee the existence of elements which do not attain their projective norms even when one of the factors is reflexive (see Example \ref{exa:bilinearnegative}.(b)). In spite of the existence of such non-norm-attaining tensors, it is natural to ask if the set of elements in a tensor product space which attain their projective norms is dense in the whole space.

Let us start by explaining where the difficulty comes from when one tries to get such a property. Assume that $z\in \NA_{\pi}(X\pten Y)$ is a norm-attaining tensor in $X \pten Y$. This implies that there are bounded sequences $(x_n)_n\subseteq X$ and $(y_n)_n\subseteq Y$ such that $z=\sum_{n=1}^\infty x_n\otimes y_n$ with $\|z\|_{\pi} =\sum_{n=1}^\infty \Vert x_n\Vert \Vert y_n\Vert$. It is clear that the task of choosing the optimal representation for $z$ as a series of basic tensors is the most difficult part. In order to avoid this inconvenience, let us make use of Theorem \ref{theo:charprojattain}. By applying it, for any bilinear mapping $B \in S_{\mathcal{B}(X \times Y)} = S_{(X\pten Y)^*}$ such that $B(z)=\| z \|_\pi$, we have that $B(x_n)(y_n)=\Vert x_n\Vert \Vert y_n\Vert$ for every $n \in \N$. In other words, $B$ attains its bilinear norm at the pair $\left( \frac{x_n}{\|x_n\|}, \frac{y_n}{\|y_n\|} \right)$ for every $n\in\mathbb N$. Because of this, in order to get examples of Banach spaces $X$ and $Y$ where the set $\NA_{\pi}(X\pten Y)$ is dense in $X \pten Y$, we need somehow that the space $\mathcal B(X\times Y)$ contains many bilinear forms which attain their bilinear norm at many elements of $S_X \times S_Y$. This motivates us to make use of the following definitions, which can be found in \cite{D} and \cite{dklm}.

\begin{definition} \label{defi:Loo} Let $X, Y$ and $Z$ be Banach spaces. 
\begin{itemize}
	\item[(a)] We say that $(X, Y)$ has the  {\bf L}$_{o,o}$ for operators if given $\eps > 0$ and $T \in \mathcal{L}(X, Y)$ with $\|T\| = 1$, there is $\eta(\eps, T) > 0$ such that whenever $x \in S_X$ satisfies $\|T(x)\| > 1 - \eta(\eps, T)$, there is $x_0 \in S_X$ such that $\|T(x_0)\| = 1$ and $\|x_0 - x\| < \eps$. 
	
	\vspace{0.2cm}

\item[(b)] We say that $(X \times Y, Z)$ satisfies the {\bf L}$_{o,o}$ for bilinear mappings if given $\eps > 0$ and $B \in \mathcal{B}(X \times Y, Z)$ with $\|B\| = 1$, there exists $\eta(\eps, B) > 0$ such that whenever $(x, y) \in S_X \times S_Y$ satisfies $\|B(x, y)\| > 1 - \eta(\eps, B)$, there is $(x_0, y_0) \in S_X \times S_Y$ such that $\|B(x_0, y_0)\| = 1$, $\|x - x_0\| < \eps$, and $\|y - y_0\| < \eps$.
\end{itemize}
\end{definition}

\begin{example} \label{exam:L_{o,o}} Let us highlight some examples and results related to the properties just defined. 
	
	\begin{itemize}
		\item[(a)] If $\dim(X), \dim(Y) < \infty$, then $(X \times Y, Z)$ has the {\bf L}$_{o,o}$ for every Banach space $Z$ (see \cite[Proposition 2.2]{dklm}).
		\vspace{0.1cm} 
		\item[(b)] $(X \times Y, \K)$ has the {\bf L}$_{o,o}$ for bilinear mappings if and only if $(X, Y^*)$ has the {\bf L}$_{o,o}$ for operators, whenever $Y$ is uniformly convex (see \cite[Lemma 2.6]{dklm}). In particular, if $X$ is finite dimensional and $Y$ is uniformly convex, then $(X \times Y, \K)$ has the {\bf L}$_{o,o}$ for bilinear forms (see \cite[Theorem 2.4]{D}).
		\vspace{0.1cm} 
		\item[(c)] If $1 < p, q < \infty$, then $(\ell_p \times \ell_q, \K)$ has the {\bf L}$_{o,o}$ if and only if $p > q'$, where $q'$ is the conjugate of $q$ (see \cite[Theorem 2.7.(b)]{dklm}).
		\vspace{0.1cm} 
		\item[(d)] There are reflexive spaces $X$ and $Y$ such that $(X \times Y, \K)$ fails the {\bf L}$_{o,o}$ (see \cite[Theorem 2.21.(ii)]{D}).
	\end{itemize}
\end{example}

Our next aim is to prove that every nuclear operator between finite dimensional Banach spaces can be approximated by nuclear operators which attain their nuclear norm. This will follow from a more general result.

\begin{proposition}\label{theo:Loonucleadense}
	Let $X, Y$ be Banach spaces. Suppose that $(X^* \times Y, \mathbb{K})$ has {\bf L}$_{o,o}$ for bilinear forms. Then, every nuclear operator from $X$ into $Y$ can be approximated (in the nuclear norm) by nuclear operators which attain their nuclear norm. In other words, 
\begin{equation*} 
\overline{\NA_{\mathcal{N}}(X, Y)}^{\|\cdot\|_N} = \mathcal{N}(X, Y). 
\end{equation*} 
\end{proposition}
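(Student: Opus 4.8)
The plan is to take a nuclear operator $T \in \mathcal{N}(X,Y)$ of norm $1$ and a tolerance $\eps > 0$, and to produce a norm-attaining nuclear operator within $\eps$ of $T$ in the nuclear norm. First I would fix a near-optimal representation $T = \sum_{n=1}^\infty \lambda_n x_n^* \otimes y_n$ with $\lambda_n \in \R^+$, $x_n^* \in S_{X^*}$, $y_n \in S_Y$, and $\sum_{n=1}^\infty \lambda_n \leq \|T\|_N + \delta$ for a suitably small $\delta$. The idea is then to invoke the $\mathbf{L}_{o,o}$ property of $(X^* \times Y, \K)$: since $(X^* \pten Y)^* = \mathcal{B}(X^* \times Y) = \mathcal{L}(X^*, Y^*)$, I would pick a norm-one bilinear form $B \in \mathcal{B}(X^* \times Y)$ that nearly attains at $T$ (or more precisely, a functional on $X^* \pten Y$ that evaluates close to $\sum_n \lambda_n$ on the lift of $T$), and use $\mathbf{L}_{o,o}$ to perturb each pair $(x_n^*, y_n)$ slightly to a new pair $(\tilde x_n^*, \tilde y_n) \in S_{X^*} \times S_Y$ at which $B$ genuinely attains its norm, i.e.\ $B(\tilde x_n^*, \tilde y_n) = 1$.

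The key mechanism is that $\mathbf{L}_{o,o}$ supplies a single modulus $\eta(\eps', B) > 0$ depending only on $\eps'$ and $B$, not on $n$. So the scheme is: choose $B$ with $\|B\|=1$ and $B(T) > 1 - \eta$ for $\eta$ small, which forces $B(x_n^*)(y_n) = \re B(x_n^*)(y_n)$ to be close to $1$ for the indices carrying most of the mass; then for each such $n$, apply $\mathbf{L}_{o,o}$ at the modulus $\eps'$ to obtain replacements $\tilde x_n^*, \tilde y_n$ with $\|\tilde x_n^* - x_n^*\| < \eps'$, $\|\tilde y_n - y_n\| < \eps'$, and $B(\tilde x_n^*, \tilde y_n) = 1$. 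Defining
\[
S = \sum_{n=1}^\infty \lambda_n \tilde x_n^* \otimes \tilde y_n,
\]
Theorem \ref{theo:charnuclattain} (via condition (2), with $G$ the element of $(\ker J)^\perp$ induced by $B$) guarantees that $S \in \NA_{\mathcal{N}}(X,Y)$, because $B$ attains value $1$ on every summand. It then remains to estimate $\|T - S\|_N \leq \sum_n \lambda_n \|\tilde x_n^* \otimes \tilde y_n - x_n^* \otimes y_n\|$, which by adding and subtracting $\tilde x_n^* \otimes y_n$ is bounded by $\sum_n \lambda_n(\|\tilde x_n^* - x_n^*\| + \|\tilde y_n - y_n\|) \leq 2\eps' \sum_n \lambda_n$, so choosing $\eps'$ in terms of $\eps$ and the mass $\sum_n \lambda_n$ closes the argument.

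The main obstacle I anticipate is the infinite tail of the series: $\mathbf{L}_{o,o}$ only moves a pair $(x_n^*, y_n)$ when $\|B(x_n^*,y_n)\|$ is \emph{already} close to $1$, and for a fixed near-optimal $B$ there is no reason every summand satisfies this. One must either argue that choosing $B(T)$ close enough to $\|T\|_N$ forces \emph{all} the normalized summands $B(x_n^*)(y_n)$ to be uniformly close to $1$, or otherwise truncate: replace the tail $\sum_{n > N} \lambda_n x_n^* \otimes y_n$ (which has small nuclear norm) separately and handle it by a direct small-norm approximation, while applying the $\mathbf{L}_{o,o}$ perturbation only to the finitely many leading terms. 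The delicate point is maintaining \emph{one} bilinear form $B$ that simultaneously attains at all the perturbed pairs, so that Theorem \ref{theo:charnuclattain}(2) applies to the full operator $S$ at once rather than term by term; reconciling the uniformity of the $\mathbf{L}_{o,o}$ modulus with the requirement $B(x_n^*)(y_n) \approx 1$ for the relevant indices is where the care is needed, and is likely where the truncation-versus-uniformity tradeoff must be made precise.
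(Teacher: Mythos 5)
Your skeleton is the paper's: fix a single norm-one bilinear form, take a near-optimal representation, use the uniform modulus $\eta(\cdot, B)$ from $\mathbf{L}_{o,o}$ to perturb the pairs, and conclude norm-attainment via Theorem \ref{theo:charnuclattain}. But the proposal stops at exactly the point where the paper does its quantitative work, and it is worth recording how that flagged gap is closed. First, the functional should be chosen \emph{exactly}, not nearly: by Hahn--Banach there is $H \in \mathcal{N}(X,Y)^*$ with $\|H\|=1$ and $H(T)=\|T\|_N$ (you are norming a single element, so no Bishop--Phelps approximation is needed), and it matters that one starts from $H$ on $\mathcal{N}(X,Y)$, so that $G = (\delta^{-1}\circ \widetilde{J}^*)(H)$ lies in $(\ker J)^{\perp}$; an arbitrary $B \in \mathcal{B}(X^*\times Y)$ that nearly norms a lift of $T$ in $X^*\pten Y$ need not annihilate $\ker J$, and then Theorem \ref{theo:charnuclattain}(2) would not apply. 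Second, the tail is handled not by truncating to finitely many leading terms but by selection according to \emph{value}: choosing the representation with $\sum_n \lambda_n < \|T\|_N + \eta^2$ (where $\eta = \eta(\eps, A_G)$) gives $\sum_n \lambda_n\left(1-\re G(x_n^*)(y_n)\right) < \eta^2$, and a Markov-type estimate shows the set $I^c = \{n : \re G(x_n^*)(y_n) \leq 1-\eta\}$ carries mass $\sum_{n\in I^c}\lambda_n < \eta$, so those terms are simply discarded. Note $I$ may well be infinite, and an index truncation at some $N$ would not guarantee that the retained terms satisfy the $\mathbf{L}_{o,o}$ hypothesis, so your ``finitely many leading terms'' variant would not run as stated. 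Your worry about maintaining one form attaining simultaneously at all perturbed pairs is vacuous: $\mathbf{L}_{o,o}$ is invoked for the single fixed $A_G$ with its single modulus, and its conclusion already asserts $|A_G(\widetilde{x}_n^*,\widetilde{y}_n)| = 1$ for every $n \in I$ with that same form.

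The one genuine oversight in your write-up is the complex-scalar case: $\mathbf{L}_{o,o}$ only yields $|G(\widetilde{x}_n^*)(\widetilde{y}_n)| = 1$, not $G(\widetilde{x}_n^*)(\widetilde{y}_n)=1$, whereas Theorem \ref{theo:charnuclattain}(2) requires the exact value $1$. The paper writes $G(\widetilde{x}_n^*)(\widetilde{y}_n) = e^{i\theta_n}$ and defines $T' = \sum_{n\in I}\lambda_n e^{-i\theta_n}\,\widetilde{x}_n^*\otimes \widetilde{y}_n$, using $\re G(x_n^*)(y_n) > 1-\eta$ to control $|1-e^{i\theta_n}|$ by (essentially) $\sqrt{2\eta}$; this rotation contributes an extra error term of order $\sqrt{2\eta}\sum_n\lambda_n$ to $\|T'-T\|_N$, alongside your $2\eps\sum_n\lambda_n$ from the pair perturbations and the discarded mass $\eta$. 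With the exact choice of $H$, the value-based selection of $I$, and the phase correction, your outline becomes precisely the paper's proof; without them it has a real, if clearly identified, hole.
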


We get the following particular case by combining Proposition \ref{theo:Loonucleadense} with Example \ref{exam:L_{o,o}}.

\begin{corollary} \label{coro:finitedim} Let $X$ be finite dimensional Banach space. If $Y$ is uniformly convex, then 
\begin{equation*} 
\overline{\NA_{\mathcal{N}} (X, Y)}^{\|\cdot\|_N} = \mathcal{N}(X, Y).
\end{equation*} 
\end{corollary}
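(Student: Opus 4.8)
The plan is to combine the characterization in Theorem \ref{theo:charnuclattain} with the identification $\mathcal N(X,Y)^*=(\ker J)^\perp\subseteq(X^*\pten Y)^*=\mathcal B(X^*\times Y)$, using the $\mathbf L_{o,o}$ property to perturb the factors of a near-optimal representation of a nuclear operator while keeping a single norming functional fixed. Concretely, I would fix $T\in\mathcal N(X,Y)$ with $\|T\|_N=1$ and, by Hahn--Banach, a functional $G\in(\ker J)^\perp$ with $\|G\|=1$ and $G(T)=\|T\|_N=1$. Viewed through the above identification, $G$ is a norm-one bilinear form on $X^*\times Y$, so the $\mathbf L_{o,o}$ hypothesis applies to it: for a given $\eps>0$ it yields a modulus $\eta=\eta(\eps,G)>0$, and since the $\mathbf L_{o,o}$ condition only becomes easier to satisfy for smaller moduli, I may assume $\eta\le\eps$. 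Finally I would pick, for a parameter $\delta>0$ chosen last, a representation $T=\sum_n\lambda_n x_n^*\otimes y_n$ with $\lambda_n>0$, $x_n^*\in S_{X^*}$, $y_n\in S_Y$, and $\sum_n\lambda_n<1+\delta$.

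The heart of the argument is the identity $\sum_n\lambda_n\bigl(1-\re G(x_n^*,y_n)\bigr)=\sum_n\lambda_n-\re G(T)<\delta$, in which every summand is nonnegative because $|G(x_n^*,y_n)|\le1$. Splitting the indices into $A=\{n:\re G(x_n^*,y_n)>1-\eta\}$ and its complement, the ``bad'' indices carry little mass, $\sum_{n\notin A}\lambda_n<\delta/\eta$, while for each $n\in A$ one has $|G(x_n^*,y_n)|>1-\eta$, so $\mathbf L_{o,o}$ supplies $(x_{0,n}^*,y_{0,n})\in S_{X^*}\times S_Y$ with $|G(x_{0,n}^*,y_{0,n})|=1$ and $\|x_n^*-x_{0,n}^*\|,\|y_n-y_{0,n}\|<\eps$. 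Since $G$ is $1$-Lipschitz in each variable and $G(x_n^*,y_n)$ is within $O(\eps+\sqrt\eta)$ of the positive real number $1$, the unimodular value $G(x_{0,n}^*,y_{0,n})$ has phase $O(\eps+\sqrt\eta)$; absorbing this phase into $x_{0,n}^*$ produces $\hat x_{0,n}^*\in S_{X^*}$ with $G(\hat x_{0,n}^*)(y_{0,n})=1$ and $\|x_n^*-\hat x_{0,n}^*\|=O(\eps+\sqrt\eta)$. I then set $\tilde T=\sum_{n\in A}\lambda_n\,\hat x_{0,n}^*\otimes y_{0,n}$; applying Theorem \ref{theo:charnuclattain}(2) with the \emph{same} $G$ gives $\tilde T\in\NA_{\mathcal N}(X,Y)$. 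Expanding $x_n^*\otimes y_n-\hat x_{0,n}^*\otimes y_{0,n}=(x_n^*-\hat x_{0,n}^*)\otimes y_n+\hat x_{0,n}^*\otimes(y_n-y_{0,n})$ for $n\in A$ and bounding the discarded terms by $\sum_{n\notin A}\lambda_n$ yields a bound of the shape $\|T-\tilde T\|_N\le C(1+\delta)(\eps+\sqrt\eta)+\delta/\eta$.

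I expect the main obstacle to be the order of quantifiers, because $\eta$ is produced from $\eps$ and $G$ and cannot be chosen freely afterwards. The correct bookkeeping, given a target $\gamma>0$, is: fix $G$ first; then choose $\eps$ (hence $\eta\le\eps$) so that $2C(\eps+\sqrt\eta)<\gamma/2$; only then choose $\delta<\min\{1,\gamma\eta/2\}$, so that $\delta/\eta<\gamma/2$; and select the near-optimal representation of $T$ last, for this $\delta$. A secondary technical point, visible above, is the phase correction in the complex case: one must check that passing to $\hat x_{0,n}^*$ keeps $\|x_n^*-\hat x_{0,n}^*\|$ small, which holds precisely because $\re G(x_n^*,y_n)>1-\eta$ forces the phase to be $O(\sqrt\eta)$ --- and this is exactly where the freedom to shrink $\eta$ is exploited. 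Granting all this, the displayed density follows, and Corollary \ref{coro:finitedim} drops out by feeding Example \ref{exam:L_{o,o}}(a),(b) into the proposition.
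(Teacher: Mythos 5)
Your proposal is correct and takes essentially the same route as the paper: you are reproving Proposition \ref{theo:Loonucleadense} in full (Hahn--Banach functional $G\in(\ker J)^\perp$ viewed as a bilinear form on $X^*\times Y$, a near-optimal representation, the estimate $\sum_n\lambda_n\bigl(1-\re G(x_n^*)(y_n)\bigr)<\delta$ split into good and bad indices, the $\mathbf{L}_{o,o}$ perturbation with phase correction, and the characterization of Theorem \ref{theo:charnuclattain} applied with the same $G$), and then deducing the corollary from Example \ref{exam:L_{o,o}}(b), exactly as the paper does. Your quantifier bookkeeping agrees with the paper's, which simply makes the concrete choice $\delta=\eta(\eps,A_G)^2$ so that $\delta/\eta=\eta$.
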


Now, we prove Proposition \ref{theo:Loonucleadense}.

\begin{proof}[Proof of Proposition \ref{theo:Loonucleadense}] Let $T \in \mathcal{N} (X, Y)$ and $\eps >0$ be given. There exists $H \in \mathcal{N} (X, Y)^*$ with $\| H \| = 1$ such that $H (T) = \|T\|_N$. Consider $G := (\delta^{-1} \circ \widetilde{J}^*)(H) \in (\ker J)^\perp$ (see Subsection \ref{remark:identinucleproj}). Let $A_G$ be the bilinear form on $X^* \times Y$ defined by $A_G (x^*, y) = G(x^*)(y)$ for every $x^* \in X^*$ and $y \in Y$. Then $\|A_G \| = \|G \| = 1$. Consider the positive value $\eta(\eps, A_G) >0$ from the assumption that $(X^* \times Y, \mathbb{K})$ has {\bf L}$_{o,o}$ for bilinear forms. Now, choose $(\lambda_n)_n \subseteq \mathbb{R}^+$, $(x_n^*)_n \subseteq S_{X^*}$, and $(y_n)_n \subseteq S_Y$ so that $T = \sum_{n=1}^\infty \lambda_n x_n^* \otimes y_n$ with 
\begin{equation*} 
\sum_{n=1}^\infty \lambda_n < \|T\|_N + \eta(\eps, A_G)^2.
\end{equation*} 
We get that
\begin{eqnarray*}
\|T \|_N = H(T) = \re H(T) &=& \sum_{n=1}^\infty \lambda_n \re \left( G(x_n^*)(y_n) \right) \\
&\leq& \sum_{n=1}^\infty \lambda_n | G(x_n^*)(y_n) | \\
&\leq& \sum_{n=1}^\infty \lambda_n < \|T\|_N + \eta(\eps, A_G)^2. \nonumber
\end{eqnarray*} 
In particular, 
\begin{equation}\label{estimation:sum_of_lambdas}
\sum_{n \in \N} \lambda_n \left( 1 - \re \left( G (x_n^*) (y_n) \right) \right) < \eta(\eps, A_G)^2
\end{equation}
Consider the following set
\begin{equation*} 
	I = \{ n \in \N : \re \left( G(x_n^*) (y_n) \right) > 1 - \eta( \eps, A_{G} ) \}.
\end{equation*} 
	From \eqref{estimation:sum_of_lambdas}, notice that 
	\[
	\eta( \eps, A_{G} ) \sum_{n \in I^c} \lambda_n \leq \sum_{n \in I^c} \lambda_n \left( 1 - \re \left( G (x_n^*) (y_n) \right) \right) < \eta( \eps, A_{G} )^2,
	\]
	which implies that $\sum_{ n \in I^c } \lambda_n < \eta(\eps, A_G)$. On the other hand, for each $n \in I $, 
	\[
	\re A_G(x_n^*, y_n) = \re \left( G (x_n^*) (y_n) \right) > 1 - \eta (\eps, A_{G}). 
	\]
	Thus, there exist norm one vectors $(\widetilde{x}_n^*)_{n \in I }$ in $X^*$ and $(\widetilde{y}_n)_{n \in I }$ in $Y$ such that 
	\[
	|A_G(\widetilde{x}_n^*, \widetilde{y}_n)| = |G (\widetilde{x}_n^*)(\widetilde{y}_n ) | = 1, \quad \| \widetilde{x}_n^* - x_n^* \| < \eps, \quad \text{and} \quad \| \widetilde{y}_n - y_n \| < \eps
	\]
	for every $n \in I $. Let us write $G(\widetilde{x}_n^*)( \widetilde{y}_n ) = e^{i \theta_n}$ with some $\theta_n \in \R$ for every $n\in I$. 
	Notice that $| 1 - e^{i\theta_n}| < \sqrt{2 \eta(\eps, A_G)}$ for every $n \in I$. Let us define 
	\[
	T' := \sum_{n \in I} \lambda_n e^{-i\theta_n} \widetilde{x}_n^* \otimes \widetilde{y}_n.
	\] 
	Then, 
	\begin{align*}
	\| T' - T \|_N &\leq \left\| \sum_{n \in I } \lambda_n ( e^{-i\theta_n} \widetilde{x}_n^* \otimes \widetilde{y}_n - x_n^* \otimes y_n )\right\|_N + \sum_{n \in I^c} \lambda_n \\
	&< \sum_{n \in I } \lambda_n | 1 -e^{i\theta_n} | + \left\| \sum_{n \in I } \lambda_n (\widetilde{x}_n^* \otimes \widetilde{y}_n - x_n^* \otimes y_n )\right\|_N + \eta(\eps, A_G) \\
	&< \sqrt{2 \eta(\eps, A_G)} (\|T\|_N + \eta(\eps, A_G)^2) + 2\eps ( \|T\|_N + \eta(\eps, A_G)^2) + \eta(\eps, A_G) \\
	&= (\sqrt{2 \eta(\eps, A_G)} + 2\eps) ( \|T\|_N + \eta(\eps, A_G)^2) + \eta(\eps, A_G). 
	\end{align*} 
Finally, it is clear by definition that $\|T'\|_N \leq \sum_{i \in I } \lambda_n$. On the other hand, 
	\begin{align*}
	\|T' \|_N \geq |H (T')| &= \left| \sum_{n\in I } \lambda_n e^{-i\theta_n} G(\widetilde{x}_n^*)(\widetilde{y}_n) \right|
	= \sum_{n\in I } \lambda_n. 
	\end{align*} 
	This shows that $T'$ attains its nuclear norm and completes the proof.
\end{proof}

Using very similar arguments to Proposition \ref{theo:Loonucleadense} and Corollary \ref{coro:finitedim}, we can obtain the following results.

\begin{proposition} \label{theo:Lootensordense} 	Let $X, Y$ be Banach spaces. Suppose that $(X \times Y, \mathbb{K})$ has {\bf L}$_{o,o}$ for bilinear forms. Then, every tensor in $X \pten Y$ can be approximated by tensors which attain their projective norm. In other words, 
\begin{equation*} 	
\overline{\NA_{\pi}(X\pten Y)}^{\|\cdot\|_{\pi}} = X\pten Y. 
\end{equation*} 
\end{proposition}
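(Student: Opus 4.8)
The plan is to transplant the proof of Proposition~\ref{theo:Loonucleadense} almost verbatim, exploiting the fact that the duality is now simpler: since $(X \pten Y)^* = \mathcal{B}(X \times Y)$ \emph{isometrically}, none of the quotient-space machinery from the nuclear case (the maps $\widetilde J$, $\delta$, and the space $(\ker J)^\perp$) is needed, and the $\mathbf{L}_{o,o}$ hypothesis is imposed directly on the pair $(X \times Y, \K)$ that controls the relevant bilinear forms. Concretely, I would fix a nonzero $z \in X \pten Y$, normalize so that $\|z\|_{\pi} = 1$, fix $\eps > 0$, and use Hahn--Banach to choose $B \in \mathcal{B}(X \times Y) = (X \pten Y)^*$ with $\|B\| = 1$ and $B(z) = \|z\|_{\pi} = 1$. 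The $\mathbf{L}_{o,o}$ property then supplies $\eta = \eta(\eps, B) > 0$, which after replacing it by $\min\{\eta, \eps\}$ (the property persists for smaller thresholds) I may assume satisfies $\eta \le \eps$.

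Next I would pick a representation $z = \sum_{n=1}^\infty \lambda_n x_n \otimes y_n$ with $\lambda_n \in \R^+$, $x_n \in S_X$, $y_n \in S_Y$, and $\sum_n \lambda_n < \|z\|_{\pi} + \eta^2$. Pairing with $B$ gives $\sum_n \lambda_n = B(z) = \sum_n \lambda_n B(x_n, y_n)$, hence $\sum_n \lambda_n\bigl(1 - \re B(x_n, y_n)\bigr) < \eta^2$. I then split $\N$ into $I = \{ n : \re B(x_n, y_n) > 1 - \eta \}$ and its complement; the previous inequality forces $\sum_{n \in I^c} \lambda_n < \eta$, so the tail is negligible. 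For $n \in I$ one has $|B(x_n, y_n)| \ge \re B(x_n, y_n) > 1 - \eta$, so $\mathbf{L}_{o,o}$ produces $\widetilde x_n \in S_X$, $\widetilde y_n \in S_Y$ with $|B(\widetilde x_n, \widetilde y_n)| = 1$, $\|\widetilde x_n - x_n\| < \eps$, and $\|\widetilde y_n - y_n\| < \eps$. Writing $B(\widetilde x_n, \widetilde y_n) = e^{i\theta_n}$ and using bilinear continuity to transfer closeness to $1$ from $(x_n,y_n)$ to $(\widetilde x_n,\widetilde y_n)$, the phase is controlled by $|1 - e^{i\theta_n}|$ of order $\sqrt{\eta}$ (up to an $O(\eps)$ term), exactly as in Proposition~\ref{theo:Loonucleadense}. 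I would then set $z' := \sum_{n \in I} \lambda_n e^{-i\theta_n}\,\widetilde x_n \otimes \widetilde y_n$.

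Two checks remain. For closeness, writing $z' - z = \sum_{n\in I}\lambda_n\bigl(e^{-i\theta_n}\widetilde x_n\otimes\widetilde y_n - x_n\otimes y_n\bigr) - \sum_{n\in I^c}\lambda_n x_n\otimes y_n$, the triangle inequality for $\|\cdot\|_{\pi}$ together with $\|x\otimes y\|_{\pi} = \|x\|\|y\|$ bounds the $I$-block by $\sum_{n\in I}\lambda_n\bigl(|1 - e^{i\theta_n}| + \|\widetilde x_n - x_n\| + \|\widetilde y_n - y_n\|\bigr) \lesssim (\sqrt{2\eta} + 2\eps)(\|z\|_{\pi} + \eta^2)$ and the tail by $\eta$; since $\eta \le \eps$, the whole quantity tends to $0$ as $\eps \to 0$. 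For norm-attainment, absorbing each unimodular scalar into the first factor gives $z' = \sum_{n \in I}\lambda_n\,(e^{-i\theta_n}\widetilde x_n)\otimes\widetilde y_n$, a representation with positive coefficients and unit vectors for which the norm-one functional $G = B \in \mathcal{L}(X, Y^*)$ satisfies $G(e^{-i\theta_n}\widetilde x_n)(\widetilde y_n) = e^{-i\theta_n}e^{i\theta_n} = 1$ for all $n \in I$; thus Theorem~\ref{theo:charprojattain}(2) yields $z' \in \NA_{\pi}(X \pten Y)$. Equivalently, $\sum_{n\in I}\lambda_n \ge \|z'\|_{\pi} \ge |B(z')| = \sum_{n\in I}\lambda_n$.

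I expect no genuine obstacle beyond bookkeeping, precisely because Theorem~\ref{theo:charprojattain} converts ``attains its projective norm'' into the directly verifiable identity $B(x_n, y_n) = 1$. The one step deserving care is the phase correction $e^{-i\theta_n}$: in the complex case it must simultaneously restore norm-attainment of $z'$ and contribute a controllably small error $|1 - e^{i\theta_n}|$ to the distance estimate, while in the real case it degenerates to a sign absorbable into $\widetilde x_n$. That phase/sign tracking, together with checking that the final estimate truly vanishes as $\eps \to 0$ (which is why I normalized $\eta \le \eps$), are the only points I anticipate needing attention.
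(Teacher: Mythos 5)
Your proposal is correct and takes essentially the same route the paper intends: Proposition \ref{theo:Lootensordense} is stated there as following ``by very similar arguments'' to Proposition \ref{theo:Loonucleadense}, and your argument is exactly that transplant, with the isometric duality $(X \pten Y)^* = \mathcal{B}(X \times Y)$ replacing the quotient machinery $(\widetilde J, \delta, (\ker J)^{\perp})$ and Theorem \ref{theo:charprojattain}(2) certifying norm-attainment of $z'$. Your explicit tracking of the $O(\eps)$ correction in the phase bound $|1 - e^{i\theta_n}|$ is, if anything, slightly more careful than the paper's own bookkeeping in the nuclear case.
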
 

\begin{corollary} \label{coro:finitedimtensor} Let $X$ be a finite dimensional Banach space. If $Y$ is uniformly convex, then 
\begin{equation*} 
\overline{\NA_{\pi} (X \pten Y)}^{\|\cdot\|_{\pi}} = X \pten Y.
\end{equation*} 
 \end{corollary}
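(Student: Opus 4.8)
The plan is to deduce this corollary directly from Proposition \ref{theo:Lootensordense}, whose sole hypothesis is that the pair $(X \times Y, \mathbb{K})$ enjoys the \textbf{L}$_{o,o}$ property for bilinear forms. Under that hypothesis, Proposition \ref{theo:Lootensordense} already asserts exactly the conclusion we want, namely $\overline{\NA_{\pi}(X \pten Y)}^{\|\cdot\|_{\pi}} = X \pten Y$. Hence the entire task reduces to verifying that this geometric property holds under the present assumptions on $X$ and $Y$, after which the result is immediate.

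First I would appeal to Example \ref{exam:L_{o,o}}.(b): since $X$ is finite dimensional and $Y$ is uniformly convex, that example (drawn from \cite{D}) guarantees that the pair $(X \times Y, \mathbb{K})$ has the \textbf{L}$_{o,o}$ for bilinear forms. With this verification in hand, I would simply invoke Proposition \ref{theo:Lootensordense} to conclude. This mirrors precisely the way Corollary \ref{coro:finitedim} is derived from Proposition \ref{theo:Loonucleadense}, the only difference being that here one works with the pair $(X \times Y, \mathbb{K})$ rather than $(X^* \times Y, \mathbb{K})$; because $X$ itself (and not merely $X^*$) is assumed finite dimensional, the hypotheses of Example \ref{exam:L_{o,o}}.(b) apply verbatim.

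Since the two invoked results carry all of the analytic weight, there is no genuine obstacle internal to this corollary. The substantive content lives entirely upstream: on one side in Proposition \ref{theo:Lootensordense}, whose argument perturbs an almost-optimal representation of a tensor into a genuinely norm-attaining one by exploiting the \textbf{L}$_{o,o}$ modulus; and on the other side in the geometric fact that a finite-dimensional factor paired with a uniformly convex one forces the \textbf{L}$_{o,o}$ property. The only step requiring a moment's care is confirming that the hypotheses are matched to the correct form of the property—here $(X \times Y, \mathbb{K})$, with $X$ finite dimensional—so that the cited example can be applied without modification.
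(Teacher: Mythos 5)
Your proposal is correct and is exactly the paper's (implicit) argument: the authors derive Corollary \ref{coro:finitedimtensor} by combining Proposition \ref{theo:Lootensordense} with Example \ref{exam:L_{o,o}}.(b), just as Corollary \ref{coro:finitedim} follows from Proposition \ref{theo:Loonucleadense}. You also correctly handle the only delicate point, namely that here the \textbf{L}$_{o,o}$ hypothesis is verified for the pair $(X \times Y, \mathbb{K})$ with $X$ itself finite dimensional, rather than for $(X^* \times Y, \mathbb{K})$ as in the nuclear case.
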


Let us notice that, although we have the first examples of denseness by using Propositions \ref{theo:Loonucleadense} and \ref{theo:Lootensordense}, property {\bf L}$_{o,o}$ seems to be very restrictive. Indeed, when a pair of Banach spaces satisfies this property, both of them must be reflexive since every bilinear mapping attains its norm. Moreover, there are reflexive spaces $X$ and $Y$ such that $(X \times Y, \K)$ fails this property (see Example \ref{exam:L_{o,o}}.(d)). On the other hand, we could have used the previous results together with Example \ref{exam:L_{o,o}}.(c) in order to get examples where the denseness holds for $\ell_p$-spaces: for instance, if $1 < p,q < \infty$ and $p > q'$, then the set $\NA_{\pi}(\ell_p \pten \ell_q)$ is dense in $\ell_p \pten \ell_q$ by Proposition \ref{theo:Lootensordense}. Nevertheless, in what follows we will take advantage of the finite dimensional case to obtain more general examples of Banach spaces where the density follows. The only problem here is the fact that in general the projective norm does not respect subspaces, but it does respect $1$-complemented subspaces. For this reason, intuitively, we need a property of Banach spaces which guarantees the existence of many $1$-complemented subspaces. Motivated by this, we consider the following definition.

\begin{definition} \label{def:propertyP} Let $X$ be a Banach space. We will say that $X$ has the {\it metric $\pi$-property} if given $\varepsilon>0$ and $\{x_1,\ldots, x_n\}\subseteq S_X$ a finite collection in the sphere, then we can find a finite dimensional 1-complemented subspace $M\subseteq X$ such that for each $i \in \{1, \ldots, n \}$ there exists $x_i'\in M$ with $\Vert x_i-x_i'\Vert<\varepsilon$.
\end{definition}

Before proceeding, let us make a small observation. Let $\eps > 0$ and $F = \{x_1, \ldots, x_n\} \subseteq S_X$ be given. Suppose that $X$ has metric $\pi$-property as defined above and let $M$ be a finite dimensional subspace of $X$ with $\|x_i' - x_i\| < \eps$ for $x_i' \in M$ and $i = 1, \ldots, n$. Let $P_{\eps, F}$ be the norm one projection onto $M$. Then, for each $i = 1, \ldots, n$, we have
\begin{equation*}
\|P_{\eps, F} (x_i) - x_i\| \leq \|P_{\eps, F}(x_i) - P_{\eps, F} (x_i')\| + \|P_{\eps, F}(x_i') - x_i\| < 2 \eps.
\end{equation*}
Consider now the net $\left\{P_{\eps, F}: \eps > 0, \ F \subset S_X \ \mbox{a finite set} \right\}$ with $(\eps_1, F_1) \leq (\eps_2, F_2)$ if and only if $\eps_2 < \eps_1$ and $F_1 \subseteq F_2$. Then, $(P_{\eps, F})_{(\eps, F)}$ strongly converges to the identity on $S_X$ and hence on $X$ with $\|P_{\eps, F}\| \leq 1$ for every $\eps$ and $F$. This shows that Definition \ref{def:propertyP} is in fact equivalent to \cite[Definition 5.1]{CASAZZA} as the classical way of defining the metric $\pi$-property (we also send the reader to \cite{JRZ} and \cite{Lindenstrauss} for more information on the $\pi$-property). 

We have the following general result, which confirms that our intuition of finding a property of Banach spaces, which guarantees the existence of many $1$-complemented subspaces, was in the right direction. This result will give us many positive examples of denseness in both norm-attaining tensor and nuclear operator cases (see Examples \ref{examples:propertyP}). 

\begin{theorem} \label{theo:propertyP} Let $X$ be a Banach space satisfying the metric $\pi$-property.
\begin{itemize}
	\item[(a)] If $Y$ satisfies the metric $\pi$-property, then $\overline{\NA_{\pi}(X \pten Y)}^{\|\cdot\|_{\pi}} = X \pten Y$.
	\vspace{0.1cm}
	\item[(b)] If $Y$ is uniformly convex, then $\overline{\NA_{\pi}(X \pten Y)}^{\|\cdot\|_{\pi}} = X \pten Y$.
\end{itemize}	
 
\end{theorem}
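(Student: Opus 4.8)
The plan is to reduce both statements to the finite-dimensional results already established (Proposition \ref{fin-dim-tomas} for part (a) and Corollary \ref{coro:finitedimtensor} for part (b)) by means of the norm-one projections furnished by the metric $\pi$-property, exploiting the fact that these projections, tensored together, act as norm-one projections on $X \pten Y$ whose ranges embed isometrically. To begin, I would fix $z \in X \pten Y$ and $\varepsilon > 0$. By the definition of the projective norm I can choose a representation $z = \sum_{n=1}^\infty \lambda_n x_n \otimes y_n$ with $\lambda_n > 0$, $x_n \in S_X$, $y_n \in S_Y$ and $\sum_n \lambda_n < \infty$, and then truncate to $z_N = \sum_{n=1}^N \lambda_n x_n \otimes y_n$, so that $\|z - z_N\|_\pi \leq \sum_{n>N}\lambda_n$ is as small as desired. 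This replaces the infinite representation by a finite family of unit vectors $x_1, \dots, x_N$ and $y_1, \dots, y_N$, to which the metric $\pi$-property can be applied.

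Next comes the structural input. Using the metric $\pi$-property of $X$ on $\{x_1, \dots, x_N\}$ together with the observation following Definition \ref{def:propertyP}, I obtain a finite-dimensional $1$-complemented subspace $M_X$ with norm-one projection $P_X$ such that $\|P_X(x_n) - x_n\|$ is arbitrarily small for every $n \leq N$; in case (a) I do the same for $Y$, producing $M_Y$ and $P_Y$. Since $P_X \otimes P_Y$ (respectively $P_X \otimes \id_Y$) has norm at most $1$ on the projective tensor product by the metric mapping property, and since $\|x_n \otimes y_n - P_X(x_n)\otimes P_Y(y_n)\|_\pi \leq \|x_n\|\,\|y_n - P_Y(y_n)\| + \|x_n - P_X(x_n)\|\,\|P_Y(y_n)\|$, a term-by-term estimate yields $\|(P_X \otimes P_Y)(z_N) - z_N\|_\pi \leq 2\delta \sum_{n\leq N}\lambda_n$, where $\delta$ bounds all the perturbations $\|P_X(x_n)-x_n\|$ and $\|P_Y(y_n)-y_n\|$ (and similarly, with only the $X$-term, in case (b)). Choosing the approximation in the metric $\pi$-property fine enough makes this piece, and hence the total error via the triangle inequality, smaller than $\varepsilon$.

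The heart of the argument is that norm-attainment transfers from the subspace tensor product to the whole space. Because $M_X$ (and $M_Y$) is $1$-complemented, $P_X \otimes P_Y$ is a norm-one projection of $X \pten Y$ onto $M_X \pten M_Y$, and applying it to an arbitrary representation shows that the inclusion $M_X \pten M_Y \hookrightarrow X \pten Y$ is isometric, which is precisely the fact that the projective norm respects $1$-complemented subspaces. Consequently, if $w \in M_X \pten M_Y$ admits a norm-attaining representation $w = \sum_j \mu_j u_j \otimes v_j$ with $u_j \in S_{M_X} \subseteq S_X$, $v_j \in S_{M_Y} \subseteq S_Y$ and $\sum_j \mu_j = \|w\|_\pi$, then the very same representation witnesses $w \in \NA_{\pi}(X \pten Y)$. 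For part (a) I then set $w := (P_X \otimes P_Y)(z_N) \in M_X \pten M_Y$, which is norm-attaining in $M_X \pten M_Y$ by Proposition \ref{fin-dim-tomas} (both factors being finite-dimensional), hence norm-attaining in $X \pten Y$, with $\|z - w\|_\pi < \varepsilon$. For part (b) I instead set $w' := (P_X \otimes \id_Y)(z_N) \in M_X \pten Y$ and invoke Corollary \ref{coro:finitedimtensor} (valid since $M_X$ is finite-dimensional and $Y$ is uniformly convex) to find $w \in \NA_{\pi}(M_X \pten Y)$ with $\|w - w'\|_\pi$ small; the isometric embedding $M_X \pten Y \hookrightarrow X \pten Y$ again yields $w \in \NA_{\pi}(X \pten Y)$ and $\|z - w\|_\pi < \varepsilon$.

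The main obstacle, though it is one the paper has already anticipated, is exactly this transfer of norm-attainment through the $1$-complemented embedding: one must be certain that a representation optimal inside $M_X \pten M_Y$ (respectively $M_X \pten Y$) remains optimal in $X \pten Y$ and uses genuine unit vectors of $X$ and $Y$, which is where the isometry of the inclusion, equivalently the norm-one projection property, is indispensable. Everything else reduces to the truncation of an absolutely convergent series and a routine triangle-inequality bookkeeping governed by the metric $\pi$-property.
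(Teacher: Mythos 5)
Your proposal is correct and follows essentially the same route as the paper: truncate a near-optimal representation, pass to finite-dimensional $1$-complemented subspaces via the metric $\pi$-property, invoke Proposition \ref{fin-dim-tomas} (case (a)) or Corollary \ref{coro:finitedimtensor} (case (b)), and transfer norm-attainment back through the isometric inclusion guaranteed by the norm-one projections (the paper cites \cite[Proposition 2.4]{rya} for this, while you reprove it directly via $P_X \otimes P_Y$, respectively $P_X \otimes \id_Y$). The only cosmetic difference is that you approximate by the projected vectors $P_X(x_n)$, $P_Y(y_n)$ using the observation after Definition \ref{def:propertyP}, whereas the paper works directly with the approximating points $x_n'$, $y_n'$; both yield identical estimates.
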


\begin{proof} (a). Let $u\in S_{X \pten Y}$ and $\varepsilon>0$ be given. By \cite[Proposition 2.8]{rya}, there are bounded sequences $(\lambda_n)_n\subseteq \mathbb R^+$, $(x_n)_n\subseteq S_X$, and $(y_n)_n\subseteq S_Y$ with $u=\sum_{n=1}^\infty \lambda_n x_n\otimes y_n$ and 
\begin{equation} \label{lambdas}
\sum_{n=1}^\infty \lambda_n < 1+\eps.
\end{equation}	
Find $k \in \mathbb{N}$ large enough so that $\| u-z\|_{\pi_{X\pten Y}}<\frac{\varepsilon}{2}$ for $z:=\sum_{n=1}^k \lambda_n x_n\otimes y_n$. Since $X$ and $Y$ have the metric $\pi$-property, we can find finite dimensional subspaces $X_0$ of $X$ and $Y_0$ of $Y$ which are $1$-complemented and such that, for every $n\in\{1,\ldots, k\}$, there are $x_n'\in X_0$ and $y_n'\in Y_0$ such that 
 \begin{equation*} 
 \max \left\{ \Vert x_n-x_n'\Vert, \Vert y_n-y_n'\Vert \right\} < \frac{\varepsilon}{4k\lambda_n}.
 \end{equation*}

 Define $z'=\sum_{n=1}^k \lambda_n x_n'\otimes y_n'$ and notice that $\|z'-z\|_{\pi_{X\pten Y}} < \frac{\varepsilon}{2}$. Moreover, note that $z'\in X_0\otimes Y_0$. We have that $X_0$ is $1$-complemented in $X$ and $Y_0$ is $1$-complemented in $Y$. Consequently, by \cite[Proposition 2.4]{rya} we get that norm of $X\pten Y$ agrees on $X_0\otimes Y_0$ with the norm of $X_0\pten Y_0$. In particular, 
 \begin{equation} \label{norm-coincide}
 \Vert z'\Vert_{\pi_{X_0\pten Y_0}}=\Vert z'\Vert_{\pi_{X\pten Y}}. 
\end{equation} 	
Finally, since $X_0, Y_0$ are finite dimensional spaces, we use Proposition \ref{fin-dim-tomas} to show that $z'$ attains its projective norm in $X_0 \pten Y_0$. Since (\ref{norm-coincide}) holds, $z'$ attains its norm in $X \pten Y$ and we are done.

\noindent
(b). Let $u \in S_{X \pten Y}$ and $\eps > 0$ be given. There are bounded sequences $(\lambda_n)_n \subseteq \R^+$, $(x_n)_n \subseteq S_X$, and $(y_n)_n \subseteq S_Y$ with $u = \sum_{n=1}^{\infty} \lambda_n x_n \otimes y_n$ and (\ref{lambdas}) holds. We can find $k$ large enough such that $\|u - z\|_{\pi_{X\pten Y}} < \frac{\eps}{3}$ for $z := \sum_{n=1}^k \lambda_n x_n \otimes y_n$. Since $X$ satisfies the metric $\pi$-property, we can find a finite dimensional subspace $X_0$ which is 1-complemented and such that for every $n \in \{1, \ldots, k\}$, there is $x_n' \in X_0$ such that $\|x_n - x_n'\| < \frac{\eps}{6k \lambda_n}$. Define $z' = \sum_{n=1}^k \lambda_n x_n' \otimes y_n$. Notice that $\|z' - z\|_{\pi_{X\pten Y}} < \frac{\eps}{3}$ and that $z' \in X_0 \otimes Y$. Since $X_0$ is finite dimensional and $Y$ is uniformly convex, by Corollary \ref{coro:finitedimtensor}, we can find $z'' \in X_0 \pten Y$ such that
\begin{equation*}
\|z' - z''\|_{\pi_{X_0 \pten Y}} < \frac{\eps}{3} \ \ \ \mbox{with} \ \ \ z'' = \sum_{n=1}^{\infty} a_n \otimes b_n \ \ \ \mbox{and} \ \ \ \|z''\|_{\pi_{X_0 \pten Y}}= \sum_{n=1}^\infty \|a_n\| \|b_n\|.
\end{equation*}
Since the norm of $X\pten Y$ agrees on $X_0\otimes Y$ with the norm of $X_0\pten Y$, the result follows as in the previous item.
\end{proof}

Let us notice that if a Banach space $Z$ satisfies the metric $\pi$-property, then it has the metric approximation property and then the analogous result for nuclear operators follows immediately from Theorem \ref{theo:propertyP} and \cite[Corollary 4.8]{rya}.

\begin{corollary} \label{theo:propertyPnuclear} Let $X$ be Banach space such that $X^*$ satisfies the metric $\pi$-property. 
\begin{itemize}
	\item[(a)] If $Y$ satisfies the metric $\pi$-property, then $\overline{\NA_{\mathcal{N}}(X, Y)}^{\|\cdot\|_N} = \mathcal{N}(X, Y)$.
	\item[(b)] If $Y$ is uniformly convex, then $\overline{\NA_{\mathcal{N}}(X, Y)}^{\|\cdot\|_N} = \mathcal{N}(X, Y)$.
\end{itemize}
\end{corollary}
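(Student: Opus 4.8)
The plan is to deduce this corollary formally from the projective-tensor result of Theorem \ref{theo:propertyP}, transported through the isometric identification $X^* \pten Y = \mathcal{N}(X,Y)$. The first step is to upgrade the hypothesis on $X^*$: as recorded in the sentence preceding the statement, the metric $\pi$-property implies the metric approximation property (indeed, the net of norm-one projections $(P_{\eps,F})_{(\eps,F)}$ built after Definition \ref{def:propertyP} converges strongly to the identity, which is precisely the metric approximation property with constant $1$), and in particular $X^*$ has the approximation property. By \cite[Corollary 4.8]{rya} this forces $\ker J = \{0\}$ and yields an onto linear isometry $\widetilde{J}\colon X^* \pten Y \longrightarrow \mathcal{N}(X,Y)$.

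Next I would translate the norm-attainment notions across this isometry. A representation $\sum_{n=1}^\infty \lambda_n x_n^* \otimes y_n$ of an element $z \in X^* \pten Y$ maps under $\widetilde{J}$ to the nuclear operator with the same representation, and since $\widetilde{J}$ is an isometry we have $\|z\|_{\pi} = \|\widetilde{J}(z)\|_N$. Hence a norm-attaining projective representation of $z$ is exactly a norm-attaining nuclear representation of $\widetilde{J}(z)$; equivalently, $\widetilde{J}\bigl(\NA_{\pi}(X^* \pten Y)\bigr) = \NA_{\mathcal{N}}(X,Y)$. This is precisely the remark made just after Definition \ref{defi:normattainnuc} that, when one factor has the approximation property, the two concepts of norm-attainment agree.

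With these identifications in hand the result is immediate. For part (a), $X^*$ and $Y$ both satisfy the metric $\pi$-property, so applying Theorem \ref{theo:propertyP}(a) with $X^*$ in the role of $X$ gives $\overline{\NA_{\pi}(X^* \pten Y)}^{\|\cdot\|_{\pi}} = X^* \pten Y$; pushing this density forward by the isometry $\widetilde{J}$ yields $\overline{\NA_{\mathcal{N}}(X,Y)}^{\|\cdot\|_N} = \mathcal{N}(X,Y)$. For part (b), $X^*$ satisfies the metric $\pi$-property and $Y$ is uniformly convex, so the same argument using Theorem \ref{theo:propertyP}(b) in place of (a) finishes the proof.

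I do not expect a genuine obstacle here, since the corollary is an assembly of earlier results rather than a new argument. The only points deserving care are verifying that the metric $\pi$-property really delivers the approximation property (so that \cite[Corollary 4.8]{rya} is applicable), and confirming that the isometry $\widetilde{J}$ carries norm-attaining elements to norm-attaining elements; both are checked above and are routine once $\ker J = \{0\}$ is established.
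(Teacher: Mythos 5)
Your proposal is correct and follows exactly the route the paper takes: the paper's proof is the one-line observation that the metric $\pi$-property implies the metric approximation property, so that $X^* \pten Y = \mathcal{N}(X,Y)$ isometrically by \cite[Corollary 4.8]{rya}, and the result then follows from Theorem \ref{theo:propertyP} applied with $X^*$ in place of $X$. You merely spell out the details the paper leaves implicit (injectivity of $J$ and the transfer of norm-attaining representations under the isometry $\widetilde{J}$), and those verifications are accurate.
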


To finish this section, let us see particular cases where Theorem \ref{theo:propertyP} and Corollary \ref{theo:propertyPnuclear} can be applied. This shows that we always have denseness in all classical Banach spaces. Note that item (a) tells us that the metric $\pi$-property happens very often. Also, the stability results, (d), (e), (f), and (g), allow us to get more positive examples on denseness. We will first recall the following definition.

\begin{definition}\label{def:FDD}
Let $X$ be a Banach space. A sequence $\left\{ X_n\right\}_{n\in \mathbb{N}}$ of finite dimensional subspaces of $X$ is called a \textit{finite dimensional decomposition} of $X$ (F.D.D. for short) if every $x\in X$ has a unique representation of the form $x=\sum_{n=1}^{+\infty}x_n$ with $x_n\in X_n$ for every $n\in \mathbb{N}$. 
\end{definition}

\begin{remark}\label{rmk:FDD}
A F.D.D. on a Banach space $X$ determines a sequence $\{ P_n\}_{n\in \mathbb{N}}$ of projections (called the \textit{partial sum projections} of the decomposition) such that if $x=\sum_{n=1}^{\infty} x_n\in X$, then $P_j(x) = \sum_{n=1}^{j} x_n$ for all $j\in \mathbb{N}$. These projections are commuting, have increasing range, and converge strongly to the identity operator on $X$. The supremum of the norms of those projections is finite and is called the \textit{decomposition constant}.
\end{remark}

\begin{example} \label{examples:propertyP} The following Banach spaces satisfy the metric $\pi$-property (which might be well-known for some readers but we could not find proper references and we include the proof for completeness).
\begin{itemize}
\item[(a)] Banach spaces with a finite dimensional decomposition with the decomposition constant $1$ (consequently, every Banach space with Schauder basis can be renormed to have the metric $\pi$-property);
\vspace{0.2cm} 
\item[(b)] 	$L_p(\mu)$-spaces for any $1 \leq p < \infty$ and any measure $\mu$;
\vspace{0.2cm}
\item[(c)] $L_1$-predual spaces;
\vspace{0.2cm}
	\item[(d)] $X \oplus_a Y$, whenever $X, Y$ satisfy the metric $\pi$-property and $|\cdot|_a$ is an absolute norm;
	\vspace{0.2cm}
	\item[(e)] $X = \left[ \bigoplus_{n \in \N} X_n \right]_{c_0}$ or $\left[ \bigoplus_{n \in \N} X_n \right]_{\ell_p}$, $\forall$ $1 \leq p < \infty$, $X_n$ satisfying the metric $\pi$-property, $\forall n$;
	\vspace{0.2cm}
	\item[(f)] $X\pten Y$, whenever $X, Y$ satisfy the metric $\pi$-property;
	\vspace{0.2cm}
	\item[(g)] $X\iten Y$, whenever $X, Y$ satisfy the metric $\pi$-property. 
\end{itemize}
\end{example}

\begin{proof} (a). Given a Banach space $X$, if there exists a sequence of finite dimensional Banach spaces and $1$-complemented subspaces $\{E_n\}_{n\in\mathbb N}$ such that $E_n\subseteq E_{n+1}$ holds for every $n$ and such that $\bigcup_{n\in \mathbb N} E_n$ is dense in $X$, then $X$ has the metric $\pi$-property. In particular, it applies whenever $X$ is a Banach space with an F.D.D. with the decomposition constant $1$ (if $P_n:X\longrightarrow X$ are the associated norm-one projections, take $E_n:=P_n(X)$). 
 
\noindent
(b). Let $1\leq p<\infty$ be given. Then, $L_p(\mu)$ has the metric $\pi$-property regardless the measure $\mu$. Let us write $X = L_p(\mu)$, for short. Consider $x_1,\ldots, x_n\in S_X, \varepsilon>0$. For every $i\in\{1,\ldots, n\}$, we can find a simple function $x_i'\in S_X$ such that
\begin{equation}\label{L1aproxsimple}
\Vert x_i-x_i'\Vert<\eps,
\end{equation}	
where $x_i'=\sum_{j=1}^{m} a_{ij}\chi_{A_{j}}$ for suitable $m\in\mathbb N$, $a_{ij}\in\mathbb R$ and pairwise disjoint $A_{j}\in \Sigma$. Now, in order to prove that $X$ has the metric $\pi$-property, define $M:=\operatorname{span}\{\chi_{A_j}: 1\leq j\leq m\}$ and let us construct $P:X\longrightarrow X$ by the equation
\begin{equation*} 
T(f):=\sum_{j=1}^m \frac{1}{\mu(A_j)}\int_{A_j} f\ d\mu \chi_{A_j}.
\end{equation*} 
It is clear from the disjointedness of $A_1,\ldots, A_m$ and the fact that $\Vert P(f)\Vert\leq \Vert f\Vert$ holds for every $f\in X$. Furthermore, it is clear from the definition that $P(f)=f$ holds for every $f\in M$, so $P$ is a norm-one projection onto $M$. The result follows since $x_i'\in M$ and by the arbitrariness of $\eps>0$. This proves (b). 

\noindent
(c). If $X$ is a Banach space with $X^* = L_1$, then $X$ has the metric $\pi$-property. Indeed, let $\eps >0$ and $\{x_1, \ldots, x_n\} \subset S_X$ be given. Define $F_1 = \{0\}$ and $F_2 = \spann \{ x_1, \ldots, x_n\}$. By \cite[Theorem 3.1]{LL} and \cite[Theorem 1.3]{NO}, we may find a subspace $E$ of $X$ such that $E$ is isometric to $\ell_\infty^m$ for some $m \in \N$ and $d(x, E) < \eps$ for all $x \in F_2$. For each $ 1 \leq i \leq n$, pick $x_i' \in E$ so that $\| x_i - x_i' \| < \eps$. By \cite[Lemma 2.1]{MP}, there exists a norm one projection $P$ from $X$ to $E$; hence $E$ is indeed an $1$-complemented finite dimensional subspace of $X$.

\noindent
(d). To prove that the metric $\pi$-property is stable by absolute sums, let us first notice that $S_X$, in its definition, can be replaced by $B_X$ (indeed, let $\eps >0$ and $\{x_1, \ldots, x_n\} \subset B_X$ be given; without loss of generality, we may assume that $x_i \neq 0$ for all $1 \leq i \leq n$; from the metric $\pi$-property, we may find a 1-complemented finite dimensional space $M$ of $X$ with $x_i' \in M$ such that $\| x_i / \|x_i \| - x_i' \| < \eps$ for every $1\leq i \leq n$; thus, $\|x_i - \|x_i\| x_i' \| < \eps$ and $\{ \|x_1\| x_1', \ldots, \|x_n\| x_n' \} \subset M$). Set $Z = X \oplus_a Y$. Let $\eps >0$ and $\{ z_1, \ldots, z_n\} \subset S_Z$ be given. If we write $z_i = (x_i, y_i)$ for each $ 1 \leq i \leq n$, then $\max \{ \|x_i \|, \|y_i\| \} \leq \|z_i \|_a = 1$ for every $ 1 \leq i \leq n$. As $X$ has the metric $\pi$-property and $\{x_1, \ldots, x_n \} \subset B_X$, there exist a $1$-complemented finite dimensional subspace $M$ of $X$ and $\{ x_1', \ldots, x_n' \} \subseteq M$ such that $\|x_i - x_i' \| < \eps$. Similarly, there exist a $1$-complemented finite dimensional subspace $N$ of $Y$ and $\{ y_1', \ldots, y_n' \} \subset N$ such that $\|y_i - y_i' \| < \eps$. If we let $z_i' = (x_i', y_i')$ for each $1 \leq i \leq n$, then for every $1 \leq i \leq n$, we have
\begin{equation*} 
\|z_i - z_i' \|_a \leq \|x_i - x_i'\| + \| y_i - y_i' \| < 2\eps.
\end{equation*} 
Let $P$ and $Q$ be norm one projections from $X$ onto $M$ and $Y$ onto $N$, respectively. Consider the map $(P,Q)$ defined on $X \oplus_a Y$ as $(x,y) \mapsto (Px, Qy)$ for every $(x,y) \in X \oplus_a Y$. Note that $(P,Q)$ is a projection with (closed) range $M \oplus_a N$. Moreover, 
\[
\| (Px , Qy) \|_a = | (\| Px \|, \|Qy \| ) |_a \leq | ( \|x\|, \|y\|) |_a = \| (x, y) \|_a 
\]
for every $(x, y) \in X \oplus_a Y$; hence $M \oplus_a N$ is a 1-complemented finite dimensional subspace of $Z$ with $\{ z_1' , \ldots, z_n' \} \subset M \oplus_a N$ satisfying $\|z_i - z_i' \| < 2\eps$ for each $ 1 \leq i \leq n$. 

\noindent
(e). This can be obtained by extending the proof of (d). Let $\{x_1,\ldots ,x_n\} \subseteq S_X$ be given. First, approximate $x_i$ by $x_i'$ of finite support. Now, say $x_i'=(x_{i1},\ldots , x_{ik},0,0,\ldots)$ with some common $k \in \mathbb{N}$. Find a 1-complemented subspace $M_j$ in $X_j$ containing ${x_{1j}, \ldots, x_{nj}}$ from the assumption that $X_j$ enjoys the metric $\pi$-property for each $1 \leq j \leq k$. Then, $M=\{(z_1,z_2,\ldots ,z_k,0,0, \ldots): z_i\in M_i, 1 \leq i \leq k\}$ is a finite dimensional subspace of $X$ which is 1-complemented by the projection $(P_1,P_2,\ldots,P_k,0,0,\ldots)$ (defined similarly as in the item (d)) and $M$ contains the set $\{ x_1', \ldots, x_n'\}$.

\noindent
(f). Let $\eps > 0$ and $z_1, \ldots, z_n \in S_{X \pten Y}$ be given. For each $1 \leq i \leq n$, consider $\{ x_j^{(i)}, y_j^{(i)} \} \subseteq B_X \times B_Y$ to be such that
\begin{equation*}
z_i = \sum_{j=1}^{\infty} x_j^{(i)} \otimes y_j^{(i)} \ \ \ \mbox{with} \ \ \ \|z_i\|_{\pi} > \sum_{j=1}^{\infty} \| x_j^{(i)} \| \| y_j^{(i)} \| - \eps. 
\end{equation*}
For each $i=1, \ldots, n$, let $N_i \in \N$ be such that
\begin{equation*}
\sum_{j=N_i+1}^{\infty} \| x_j^{(i)} \| \| y_j^{(i)} \| < \frac{\eps}{2}.
\end{equation*}
Now, since $X$ has the metric $\pi$-property, there exists a 1-complemented finite dimensional subspace $M$ of $X$ with
\begin{equation*}
\left\{ \widetilde{x_j}^{(i)}: 1 \leq j \leq N_i, 1 \leq i \leq n \right\} \subseteq M \ \mbox{such that} \ \| \widetilde{x_j}^{(i)} - x_j^{(i)} \| < \min \left\{ \frac{\eps}{4N_i}: 1 \leq i \leq n \right\}
\end{equation*}
and, analogously, there exists a $1$-complemented finite dimensional subspace $N$ of $Y$ with
\begin{equation*}
\left\{ \widetilde{y_j}^{(i)}: 1 \leq j \leq N_i, 1 \leq i \leq n \right\} \subseteq N \ \mbox{such that} \ \| \widetilde{y_j}^{(i)} - y_j^{(i)} \| < \min \left\{ \frac{\eps}{4N_i}: 1 \leq i \leq n \right\}
\end{equation*}
for each $1 \leq j \leq N_i$ with $i=1,\ldots, n$. By \cite[Proposition 2.4]{rya}, $M \pten N$ is an $1$-complemented space. Let $\widetilde{z_i} := \sum_{j=1}^{N_i} \widetilde{x_j}^{(i)} \otimes \widetilde{y_j}^{(i)}$. Then,
\begin{equation*}
\left\| \widetilde{z_i} - \sum_{j=1}^{N_i} x_j^{(i)} \otimes y_j^{(i)} \right\|_{\pi} \leq 2 N_i \min \left\{ \frac{\eps}{4N_i}: 1 \leq i \leq n \right\} \leq \frac{\eps}{2} 
\end{equation*}
for every $i = 1, \ldots, n$. Then, $X \pten Y$ has the metric $\pi$-property, as desired.

\noindent
(g).  Let $z_1, \ldots, z_n \in S_{X \iten Y}$ and $\delta > 0$ be given. For each $i \in \{1, \ldots, n\}$, let $\widetilde{z_i} \in X \otimes Y$ be such that $\|z_i - \widetilde{z_i}\|_{\eps} < \frac{\delta}{2}$. Let $\sum_{j=1}^{N_i} x_j^{(i)} \otimes y_j^{(i)}$ be a representation of $\widetilde{z_i}$ for each $i = 1, \ldots, n$. Since
	\begin{equation*}
	\{x_j^{(i)}: 1 \leq j \leq N_i, 1 \leq i \leq n \} \subseteq X \ \ \ \mbox{and} \ \ \ \{y_j^{(i)}: 1 \leq j \leq N_i, 1 \leq i \leq n \} \subseteq Y,
	\end{equation*}
	there are $1$-complemented finite dimensional subspaces $M \leq X$ and $N \leq Y$ with $\{\widetilde{x_j}^{(i)}: 1 \leq j \leq N_i, 1 \leq i \leq n \} \subseteq M$ and $\{\widetilde{y_j}^{(i)}: 1 \leq j \leq N_i, 1 \leq i \leq n \} \subseteq N$ such that
	\begin{equation*}
	\|x_j^{(i)} - \widetilde{x_j}^{(i)}\| < \min \left\{ \frac{\eps}{4 N_i}: 1 \leq i \leq n \right\} \ \mbox{and} \ \|y_j^{(i)} - \widetilde{y_j}^{(i)}\| < \min \left\{ \frac{\eps}{4 N_i}: 1 \leq i \leq n \right\}.
	\end{equation*}
	As $M \iten N$ is a 1-complemented subspace of $X \iten Y$ (see, for instance, \cite[Proposition 3.2]{rya}), 
	\begin{equation*}
	\widetilde{v_i} = \sum_{j=1}^{N_i} \widetilde{x_j}^{(i)} \otimes \widetilde{y_j}^{(i)} \in M \iten N \ \ \ \mbox{and} \ \ \ \|\widetilde{z_i} - \widetilde{v_i}\|_{\eps} \leq \|\widetilde{z_i} - \widetilde{v_i}\|_{\pi} \leq \frac{\delta}{2},
	\end{equation*}
	which implies that $\|z_i - \widetilde{v_i}\|_{\eps} < \delta$, we have that $X \iten Y$ satisfies the metric $\pi$-property.
 \end{proof} 

\begin{remark}\label{remark:tensorazo}
From the estimates of case (g) above it follows that $X\widehat{\otimes}_\alpha Y$ has the metric $\pi$-property whenever $X$ and $Y$ enjoy the metric $\pi$-property and $\alpha$ is a \textit{uniform cross norm} (see \cite[Section 6.1]{rya} for background and details).
\end{remark}

 

Example \ref{examples:propertyP}.(g) allows us to extend Theorem \ref{theo:propertyP} for larger projective tensor products. 

\begin{corollary} Let $N \in \N$ be given. Let $X_1, \ldots, X_N$ be Banach spaces with the metric $\pi$-property, and $Y$ be a Banach space. Then,
\begin{equation*} 
\overline{\NA_{\pi}(X_1 \pten \cdots \pten X_N \pten Y)}^{\|\cdot\|_{\pi}} = X_1 \pten \cdots \pten X_N \pten Y.
\end{equation*}  
\end{corollary}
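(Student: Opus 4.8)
The plan is to collapse the first $N$ factors into a single Banach space that inherits the metric $\pi$-property, thereby reducing the statement to the two-factor density result already established in Theorem \ref{theo:propertyP}. First I would use the canonical isometric identification
\begin{equation*}
X_1 \pten \cdots \pten X_N \pten Y = \left( X_1 \pten \cdots \pten X_N \right) \pten Y,
\end{equation*}
which holds because the projective tensor norm is associative (see \cite{rya}); since this identification carries norm-attaining representations to norm-attaining representations, it preserves the set $\NA_{\pi}$. Writing $W := X_1 \pten \cdots \pten X_N$, it therefore suffices to prove that $\overline{\NA_{\pi}(W \pten Y)}^{\|\cdot\|_{\pi}} = W \pten Y$.

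Next I would establish that $W$ has the metric $\pi$-property, arguing by induction on $N$. The case $N = 1$ is the hypothesis on $X_1$. For the inductive step I would regroup $X_1 \pten \cdots \pten X_{k+1} = \left( X_1 \pten \cdots \pten X_k \right) \pten X_{k+1}$, observe that the first factor has the metric $\pi$-property by the induction hypothesis and that $X_{k+1}$ has it by hypothesis, and then apply Example \ref{examples:propertyP}.(f) to conclude that their projective tensor product again has the metric $\pi$-property. Hence $W$ has the metric $\pi$-property.

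With $W$ carrying the metric $\pi$-property, the proof concludes by applying Theorem \ref{theo:propertyP} to the pair $(W, Y)$ and undoing the regrouping via the isometric identification above. I expect the main obstacle to be organizational rather than analytic: one must track the parenthesization carefully so that Example \ref{examples:propertyP}.(f) is invoked on the correct pair at each inductive stage, and one must use the isometric associativity both to transfer the metric $\pi$-property to $W$ and to carry the final density conclusion back to the ambient $N$-fold product. All the genuine analytic content resides in Theorem \ref{theo:propertyP}, whose hypotheses on the second factor are what ultimately furnish the density.
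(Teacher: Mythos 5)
Your overall route coincides with the paper's: the corollary is stated there without a separate proof, as an immediate combination of the stability of the metric $\pi$-property under projective tensor products (Example \ref{examples:propertyP}.(f); the paper cites (g), which covers the projective norm via Remark \ref{remark:tensorazo}) with Theorem \ref{theo:propertyP}, exactly as you propose, and your induction showing that $W:=X_1\pten\cdots\pten X_N$ has the metric $\pi$-property is correct. Note also that, as you yourself observe at the end, the final step forces $Y$ to satisfy the hypotheses of Theorem \ref{theo:propertyP} (metric $\pi$-property or uniform convexity); the corollary must be read with that assumption on $Y$, since density in $W\pten Y$ for a completely arbitrary $Y$ is not available in the paper.

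There is, however, one genuine gap: the claim that the isometric identification $X_1\pten\cdots\pten X_N\pten Y=W\pten Y$ \emph{preserves} $\NA_{\pi}$. The direction you actually need --- passing from a norm-attaining two-fold representation $z=\sum_n w_n\otimes y_n$ in $W\pten Y$ to a norm-attaining $(N+1)$-fold representation by elementary tensors --- fails in general, because the legs $w_n\in W$ of an optimal two-fold representation need not themselves attain their projective norms in $W$, and expanding a non-attaining $w_n$ strictly increases the coefficient sum. Concretely, take $X_1=L_1(\mathbb{T})$, $X_2=\ell_2^2$ (both have the metric $\pi$-property) and $Y=\K$: if $w\in X_1\pten X_2$ does not attain its projective norm (such $w$ exist by Example \ref{exa:bilinearnegative}.(a)), then $z=w\otimes 1$ attains its norm in $W\pten\K$, the representation $w\otimes 1$ being optimal, yet $z$ has no norm-attaining three-fold representation, since three-fold representations of $z$ correspond exactly to two-fold representations of $w$ in $X_1\pten X_2$. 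The repair is to open up the proof of Theorem \ref{theo:propertyP} rather than quote it as a black box: the $1$-complemented finite-dimensional subspace of $W$ furnished by the proof of Example \ref{examples:propertyP}.(f) can be taken of the form $M_1\pten\cdots\pten M_N$ with each $M_i\subseteq X_i$ finite dimensional and $1$-complemented; every element of $M_1\pten\cdots\pten M_N$ attains its norm \emph{as an $N$-fold tensor} by the multifactor analogue of Proposition \ref{fin-dim-tomas} (the same compactness and Carath\'eodory argument gives $B_{M_1\pten\cdots\pten M_N}=\co\left(B_{M_1}\otimes\cdots\otimes B_{M_N}\right)$), and substituting these finite optimal expansions into the legs of the two-fold attaining approximant produces a genuine $(N+1)$-fold norm-attaining representation without increasing the coefficient sum. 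With this local modification your argument is complete and agrees with what the paper intends.
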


\section{There are tensors which cannot be approximated by norm-attaining tensors} \label{contraejemplocojonudo}

By the results from previous section, one may think that the denseness for norm-attaining tensors always holds true. In this section, we will see that this is not the case. We show that there are Banach spaces $X$ and $Y$ such that the set of all tensors in $X \pten Y^*$ which attain their projective norms {\it is not} dense in $X \pten Y^*$. In order to do that, let us notice that, by Theorem \ref{theo:charprojattain}, it would be enough to show that $\NA(X,Y^{**})\cap B_{L(X,Y^{**})}$ is not norming for $X\pten Y^*$ (and in fact that is what we do; see Remark \ref{remark:NAnormante}). On the other hand, in view of the proof of \cite[Proposition 2.3]{llr2}, note that if either $X$ or $Y^{**}$ satisfies the metric approximation property (respectively, bounded approximation property), then $\mathcal{F}(X,Y^{**})$ is norming (respectively, $K$-norming) for $X\pten Y^{*}$, and this implies that $\mathcal F(X,Y^{**})$ is $w^*$-dense in $\mathcal{L}(X,Y^{**})$. This suggests us to look for our counterexample in the context of Banach spaces failing the approximation property and trying to guarantee that the set of operators which attain their norms is not bigger than the set of finite-rank operators. This is the reason why we will adapt \cite[Theorem 1]{martinjfa} taking into account all the previous considerations.

For this, we will use Read's space $\mathcal{R}$ (see \cite{KGM, KGMW, R}  for all the details on this space). Read's space is a renorming of the Banach space $c_0$, $\mathcal{R} = (c_0, \nn \cdot )$, which has bidual $\mathcal{R}^{**}$ strictly convex (see \cite[Theorem 4]{KGM}). This implies that $\NA(X, \mathcal{R}^{**}) \subseteq \mathcal{F}(X, \mathcal{R}^{**})$ whenever $X$ is a closed subspace of $c_0$ (see \cite[Lemma 2]{martinjfa}). It is worth mentioning that we are not using here the deep properties of $\mathcal{R}$ (that it contains no two-codimensional proximal subspaces) but only the fact that its bidual is strictly convex for the bidual norm and that it contains $c_0$ (this is in fact well-known; the existence of such norms can be justified, for instance, by using \cite[Lemma 2.1]{KGMW} and taking $R$ as a one-to-one operator from $c_0$ into $\ell_2$).

\begin{theorem} \label{counterexample} Let $\mathcal{R}$ be Read's space. There exist subspaces $X$ of $c_0$ and $Y$ of $\mathcal{R}$ such that the set of tensors in $ X \pten Y^*$ which attain their projective norms is not dense in $X \pten Y^*$. 
\end{theorem}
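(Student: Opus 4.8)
The plan is to exhibit a single nonzero tensor $z_0 \in X \pten Y^*$ that is annihilated by every norm-attaining operator of $\mathcal{L}(X, Y^{**}) = (X \pten Y^*)^*$, and then to set $w_0 := z_0/\|z_0\|_{\pi}$; this alone defeats density. Indeed, suppose some $z \in \NA_{\pi}(X \pten Y^*)$ satisfied $\|z - w_0\|_{\pi} < \frac12$. Writing a norm-attaining representation $z = \sum_n \lambda_n x_n \otimes \psi_n$ with $x_n \in S_X$ and $\psi_n \in S_{Y^*}$, Theorem \ref{theo:charprojattain} furnishes $G \in \mathcal{L}(X, Y^{**})$ with $\|G\| = 1$ and $G(x_n)(\psi_n) = 1$ for all $n$; since $\|x_n\| = \|\psi_n\| = 1$, this forces $\|G(x_n)\| = 1 = \|G\|$, so $G \in \NA(X, Y^{**})$. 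Then $G(z) = \sum_n \lambda_n = \|z\|_{\pi}$ while $G(z_0) = 0$, whence $\|z\|_{\pi} = |G(z - w_0)| \leq \|z - w_0\|_{\pi} < \frac12$, contradicting $\|z\|_{\pi} \geq 1 - \|z - w_0\|_{\pi} > \frac12$. This is exactly the non-norming of $\NA(X, Y^{**}) \cap B_{\mathcal{L}(X, Y^{**})}$ announced before the statement (Remark \ref{remark:NAnormante}).

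It remains to produce such a $z_0$, and the mechanism that lets norm-attaining operators annihilate it is the strict convexity of Read's bidual. I would take $Y$ to be a closed subspace of $\mathcal{R}$; then $Y^{**}$ embeds isometrically into the strictly convex space $\mathcal{R}^{**}$ and is therefore strictly convex, and composing a norm-attaining $G \in \NA(X, Y^{**})$ with this isometric inclusion yields a norm-attaining operator into $\mathcal{R}^{**}$, which is finite-rank by \cite[Lemma 2]{martinjfa} because $X \subseteq c_0$. Thus $\NA(X, Y^{**}) \subseteq \mathcal{F}(X, Y^{**})$, and it suffices that $z_0$ annihilate $\mathcal{F}(X, Y^{**})$. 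Since a rank-one operator $x^* \otimes w^{**}$ acts on $z_0 = \sum_n a_n \otimes \phi_n$ by $w^{**}\big(\sum_n x^*(a_n)\phi_n\big)$, the pre-annihilator ${}^{\perp}\mathcal{F}(X, Y^{**})$ is precisely the kernel of the canonical operator map $X \pten Y^* \to \mathcal{L}(X^*, Y^*)$, $z \mapsto \big(x^* \mapsto \sum_n x^*(a_n)\phi_n\big)$.

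For the kernel element I would adapt \cite[Theorem 1]{martinjfa}. Take $X$ to be a closed subspace of $c_0$ failing the approximation property (the existence of such subspaces is classical and is the starting point of that construction) and let $Y$ be the same linear subspace endowed with the norm of $\mathcal{R}$, so that $Y \subseteq \mathcal{R}$ and $Y^*$ equals $X^*$ as a vector space, with an equivalent norm. By Grothendieck's trace criterion, the failure of the approximation property gives $u = \sum_n z_n^* \otimes z_n \in X^* \pten X$ with $\sum_n \|z_n^*\|\|z_n\| < \infty$, with $\sum_n z_n^*(x) z_n = 0$ for every $x \in X$, but with $\operatorname{tr}(u) = \sum_n z_n^*(z_n) \neq 0$. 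I then set $z_0 := \sum_n z_n \otimes z_n^* \in X \pten Y^*$, the series converging because the norm of $\mathcal{R}$ is equivalent to the supremum norm. The identity $u \mapsto 0$ reads $\sum_n x^*(z_n)\, z_n^* = 0$ for all $x^* \in X^*$ (the transpose form of $\sum_n z_n^*(y) z_n = 0$ for all $y \in Y = X$), which is exactly the vanishing of the canonical operator map on $z_0$; hence $G(z_0) = 0$ for every finite-rank, thus every norm-attaining, $G$. On the other hand, pairing $z_0$ with the bounded inclusion $\iota\colon X \hookrightarrow Y^{**}$ gives $\langle z_0, \iota \rangle = \sum_n z_n^*(z_n) = \operatorname{tr}(u) \neq 0$, so $z_0 \neq 0$ in the completed tensor product $X \pten Y^*$.

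The step I expect to be most delicate is guaranteeing that $z_0$ is genuinely nonzero in the completion $X \pten Y^*$ while simultaneously lying in the kernel of the operator map — a tension that the trace pairing with $\iota$ resolves, but only once the two factors are assigned opposite roles. The domain factor must remain inside $c_0$ for \cite[Lemma 2]{martinjfa} to convert norm-attainment into finite rank, whereas the range of the associated operators must be housed in $\mathcal{R}$ so that $Y^{**}$ is strictly convex; making a single approximation-property-failing subspace serve both purposes is the real substance of adapting \cite[Theorem 1]{martinjfa}. With $z_0 \neq 0$ secured, the reductions of the first two paragraphs complete the argument.
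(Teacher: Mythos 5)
Your argument is correct, and while it follows the paper's global skeleton --- $X\subseteq c_0$ failing the approximation property, $Y$ the same subspace under Read's norm, strict convexity of $Y^{**}=Y^{\perp\perp}\subseteq \mathcal{R}^{**}$ combined with \cite[Lemma 2]{martinjfa} and Theorem~\ref{theo:charprojattain} to turn any nearby norm-attaining tensor into a norming functional that is a finite-rank operator --- the core technical step is genuinely different. The paper shows that $B_{\mathcal{F}(X,Y^{**})}$ is not norming for $X\pten Y^*$ by an abstract separation argument: Lemma~\ref{lemma:APSOT} keeps the formal identity out of $R\overline{B_{\mathcal{F}(X,Y)}}^{WOT}$ for every $R>0$, Lemma~\ref{counter:lemma1} (a principle-of-local-reflexivity argument requiring $Y^*$ separable) upgrades this to $B_{\mathcal{F}(X,Y^{**})}$, and the Hahn--Banach theorem then produces a witness tensor $z$ with $\sup\{|G(z)|:G\in B_{\mathcal{F}(X,Y^{**})}\}<1-\alpha$. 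You instead build the witness explicitly from Grothendieck's trace criterion: the transposed tensor $z_0=\sum_n z_n\otimes z_n^*$ induces the zero operator $X^*\to Y^*$ (your series $\sum_n x^*(z_n)z_n^*$ is precisely the adjoint of the zero operator $\sum_n z_n^*(\cdot)\,z_n$, and it converges in norm, so the interchange is legitimate), hence $z_0$ lies in the pre-annihilator of $\mathcal{F}(X,Y^{**})$, while pairing with the formal inclusion $\iota\in\mathcal{L}(X,Y^{**})=(X\pten Y^*)^*$ recovers the nonzero trace and certifies $z_0\neq 0$. In effect the separating functional of the paper's Hahn--Banach step is made concrete (it is $\iota$ itself), and you obtain the stronger conclusion $\sup\{|G(z_0)|:G\in B_{\mathcal{F}(X,Y^{**})}\}=0$, giving the clean estimate $\dist\left(z_0/\|z_0\|_\pi,\, \NA_\pi(X\pten Y^*)\right)\geq \tfrac12$ rather than the paper's $\alpha/2$. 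What each approach buys: your route is shorter and constructive --- no local reflexivity, no separability hypothesis on $Y^*$, no $SOT$/$WOT$/$w^*$ bookkeeping --- and it still yields the paper's by-products, since any finite-rank tensor close to $z_0/\|z_0\|_\pi$ cannot attain its norm, and every $G\in \overline{\co(\NA(X,Y^{**})\cap B_{\mathcal{L}(X,Y^{**})})}^{w^*}$ annihilates $z_0$ while $\iota/\|\iota\|$ does not, which recovers Corollary~\ref{theo:wstartdensity}; the paper's heavier route, in exchange, isolates Lemmas~\ref{counter:lemma1} and~\ref{lemma:APSOT} as statements of independent interest and explains structurally (via norming sets, cf.\ Remark~\ref{remark:NAnormante}) why the counterexample must live outside the approximation property.
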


In order to prove Theorem \ref{counterexample}, we would like to present several results, which, from our point of view, have their own interest.

\begin{lemma} \label{counter:lemma1} Let $X, Y$ be a Banach spaces such that $Y^*$ is separable. If $\mathcal{F} (X, Y^{**})$ is viewed as a subspace of $(X \pten Y^*)^* = \mathcal{L}(X, Y^{**})$, we have 
\begin{equation*} 
	B_{\mathcal{F}(X, Y^{**})} \subset \overline{B_{\mathcal{F}(X,Y)}}^{w^*}. 
\end{equation*} 
\end{lemma}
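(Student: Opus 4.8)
The plan is to produce, for a fixed finite-rank $T \in B_{\mathcal{F}(X,Y^{**})}$, an explicit net in $B_{\mathcal{F}(X,Y)}$ converging to $T$ in the $w^*$-topology of $\mathcal{L}(X,Y^{**}) = (X \pten Y^*)^*$. Recall the duality is $\left\langle U, \sum_k x_k \otimes y_k^* \right\rangle = \sum_k \langle U(x_k), y_k^* \rangle$. Since all approximants I construct will lie in $B_{\mathcal{F}(X,Y)}$ and $\|T\| \leq 1$, a routine $3\eps$-argument reduces the verification of $w^*$-convergence to the elementary tensors $x \otimes y^*$, which span a dense subspace of $X \pten Y^*$. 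Thus the task becomes: approximate the scalars $\langle T(x), y^* \rangle$ by $\langle S(x), y^* \rangle$ with $S \in \mathcal{F}(X,Y)$ and $\|S\| \leq 1$.

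First I would fix the finite-dimensional range $E := T(X) \subseteq Y^{**}$. The naive move---replacing $T = \sum_i x_i^* \otimes y_i^{**}$ by $\sum_i x_i^* \otimes y_i$, where each $y_i \in Y$ is a Goldstine $w^*$-approximant of $y_i^{**}$---does give pointwise convergence on elementary tensors, but it does \emph{not} control the operator norm, and this norm control is the only genuine obstacle. To overcome it, I would invoke the principle of local reflexivity: for $\eps > 0$ and a finite set $G \subseteq Y^*$, there exists $R \colon E \longrightarrow Y$ with $\|R\| \leq 1 + \eps$ and $\langle y^*, R(e) \rangle = \langle e, y^* \rangle$ for every $e \in E$ and every $y^* \in G$.

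Given such an $R$, I define
\[
S_{(\eps,G)} := \frac{1}{1+\eps}\, R \circ T \in \mathcal{F}(X,Y),
\]
which satisfies $\|S_{(\eps,G)}\| \leq \frac{1}{1+\eps}\|R\|\,\|T\| \leq 1$, so $S_{(\eps,G)} \in B_{\mathcal{F}(X,Y)}$. Directing the indices $(\eps, G)$ by $\eps \downarrow 0$ and $G \uparrow Y^*$ yields a net. For a fixed elementary tensor $x \otimes y^*$, as soon as $y^* \in G$ the functional-matching property together with $T(x) \in E$ gives $\langle S_{(\eps,G)}(x), y^* \rangle = \frac{1}{1+\eps}\langle T(x), y^* \rangle$, which tends to $\langle T(x), y^* \rangle$ as $\eps \to 0$. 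Hence $S_{(\eps,G)} \to T$ on all elementary tensors, and by the reduction above $S_{(\eps,G)} \to T$ in the $w^*$-topology. Therefore $T \in \overline{B_{\mathcal{F}(X,Y)}}^{\,w^*}$, which is the desired inclusion.

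I expect the norm control to be the crux, and it is exactly the near-isometric lifting $R$ furnished by local reflexivity, combined with the renormalizing factor $(1+\eps)^{-1}$, that resolves it. Let me note that this argument does not really use the separability of $Y^*$; that hypothesis is presumably present to make the $w^*$-topology metrizable on bounded sets, which is convenient when the lemma is later fed into the proof of Theorem \ref{counterexample}. Should one wish to avoid local reflexivity, separability permits a sequential Goldstine-type construction instead, but the norms of the approximants must still be tamed, and the cleanest device for that remains the lifting $R$.
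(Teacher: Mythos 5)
Your proposal is correct and follows essentially the same route as the paper: both apply the principle of local reflexivity to the finite-dimensional range $T(X)$, compose the resulting map into $Y$ with $T$, and verify $w^*$-convergence by testing against elementary tensors $x \otimes y^*$ and using the uniform bound on the approximants together with density of their span in $X \pten Y^*$. The only differences are technical refinements on your side: your net indexed by pairs $(\eps, G)$ together with the $(1+\eps)^{-1}$ rescaling treats the closed unit ball directly and, as you correctly observe, dispenses with the separability of $Y^*$, whereas the paper uses separability to build a sequence $\phi_n \circ T$ for $\|T\| < 1$ and then passes to the closed ball via norm-closedness of $w^*$-closed sets.
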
 

\begin{proof} 
Let $T \in {\mathcal{F}(X, Y^{**}) }$ with $\|T \| < 1$. Choose a countable dense subset $(y_n^*)_{n \in \N}$ of $Y^*$ and let $F_n = \text{span} \{ y_1^*, \ldots, y_n^* \}$ for each $n \in \N$. By the Principle of Local Reflexivity, for each $n \in \N$, there exists an operator $\phi_n : T(X) \rightarrow Y$ such that 
\begin{enumerate}
\item $\left( 1- \frac{1}{n} \right) \|T(x)\| \leq \| \phi_n (T(x)) \| \leq \left( 1+\frac{1}{n} \right) \|T(x)\|$ for every $x \in X$, 
\vspace{0.1cm}
\item $y^* (\phi_n (T(x) )) = y^* (T(x))$ for every $y^* \in F_n$ and $x \in X$.  
\end{enumerate} 
Choose $n_0 \in \N$ so that $\frac{1}{n} < \frac{1}{\|T\|} - 1$ whenever $n \geq n_0$. Let us define $K_n = \phi_n \circ T \in \mathcal{F} (X, Y)$ for each $n \geq n_0$. Then $\| K_n \| \leq \|\phi_n \| \| T \| < 1$ for each $n \geq n_0$. 
We claim that $K_n \xrightarrow{w^*} T$. First, observe that given $x \in X$ and $m \in \N$, we have 
\begin{equation}\label{PLR_estimate}
y_m^* (K_n (x)) = y_m^* ( \phi_n ( T(x))) = y_m^* ( T(x)) \text{ for every $n \geq m$.}
\end{equation}
Now, let $x \in X \setminus \{0\} $, $y^* \in Y^*$ and $\eps >0$ be given. Pick $n_0 \in \N$ so that $\| y_{n_0}^* - y^* \| < \frac{\eps}{2\|x\|}$. By \eqref{PLR_estimate}, we have for $n \geq n_0$, 
\begin{align*}
| y^* (K_n (x))  - y^* ( T(x)) | &\leq | y^* (K_n (x)) -y_{n_0}^* ( K_n (x)) | + |y_{n_0}^* ( K_n (x))  -  y_{n_0}^* ( T(x)) | \\
&\hspace{6cm}+ | y_{n_0}^* ( T(x))  - y^* ( T(x)) | \\
&\leq \|y^* - y_{n_0}^* \| \|K_n \| \|x\| + \| y_{n_0}^* - y^* \| \|T\|\|x\| \\
&< \frac{\eps}{2} + \frac{\eps}{2}  = \eps.
\end{align*} 
By a linearity argument we get that $K_n(z)\rightarrow T(z)$ for every $z\in X\otimes Y$. Finally, since the sequence $K_n$ is bounded we get that $K_n\rightarrow T$ in the $w^*$-topology.

This implies that $\{ T \in \mathcal{F} (X , Y^{**} ) : \| T \| < 1 \} \subset \overline{B_{\mathcal{F}(X,Y)}}^{w^*}$. As a $w^*$-closed set in $\mathcal{L} (X, Y^{**})$ is $\|\cdot\|$-closed, we conclude that 
$B_{\mathcal{F}(X, Y^{**})} \subset \overline{B_{\mathcal{F}(X,Y)}}^{w^*}$. 
\end{proof} 



In what follows, we will be using the {\it strong operator topology} ($SOT$, for short) and the {\it weak operator topology} ($WOT$, for short). Recall that the strong operator topology in $\mathcal{L}(X, Y)$ is the topology defined by the basic neighborhoods
\begin{equation*}
N(T; A, \eps) = \left\{S \in \mathcal{L}(X, Y): \|(T - S)(x)\| < \eps, x \in A \right\},
\end{equation*}
where $A$ is an arbitrary finite subset of $X$ and $\eps > 0$. Thus, in the $SOT$, a net $(T_{\alpha})$ converges to $T$ if and only if $(T_{\alpha}(x))$ converges to $T(x)$ for every $x \in X$. On the other hand, the weak operator topology is defined by the basic neighborhoods 
\begin{equation*}
N(T; A, A^*, \eps) = \left\{ S \in \mathcal{L}(X, Y), |y^*(T - S)(x)| < \eps, y^* \in A^*, x \in A \right\},
\end{equation*}
where $A$ and $A^*$ are arbitrary finite sets in $X$ and $Y^*$, respectively, and $\eps > 0$. Thus, in the $WOT$, a net $T_{\alpha}$ converges to $T$ if and only if $(y^*(T_{\alpha}(x)))$ converges to $y^*(T(x))$ for every $x \in X$ and $y^* \in Y^*$.

Let us notice that a convex set in $\mathcal{L}(X, Y)$ has the same closure in the $WOT$ as it does in the $SOT$ (see, for instance, \cite[Corollary 5, page 477]{DS}). We will use this fact in the proof of Theorem \ref{counterexample} below.

\begin{lemma}\label{lemma:APSOT} Let $X$ be a Banach space failing the approximation property. Then, the identity map on $X$ does not belong to $R \overline{ B_{\mathcal{F}(X, X) }}^{WOT}$ for any $R > 0$. 
\end{lemma}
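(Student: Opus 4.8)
The plan is to argue by contradiction, exploiting the observation recalled just before the statement that a convex subset of $\mathcal{L}(X,X)$ has the same closure in the weak and strong operator topologies. Suppose, for some $R>0$, that $\id_X \in R\,\overline{B_{\mathcal{F}(X,X)}}^{WOT}$. Since $B_{\mathcal{F}(X,X)}$ is convex, so is $R\,B_{\mathcal{F}(X,X)}$, and hence its weak-operator closure coincides with its strong-operator closure. Consequently $\id_X \in R\,\overline{B_{\mathcal{F}(X,X)}}^{SOT}$. This reduction from the weak to the strong operator topology is the heart of the matter: it converts the merely weak convergence of the images $T_\alpha x$ into norm convergence, which is exactly what an approximation argument requires.

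Being in the $SOT$-closure produces a net $(T_\alpha)\subseteq \mathcal{F}(X,X)$ with $\|T_\alpha\|\leq R$ for all $\alpha$ and $\|T_\alpha x - x\|\to 0$ for every $x\in X$. I would then upgrade this pointwise norm convergence to uniform convergence on compact sets, using the uniform bound $\|T_\alpha\|\leq R$ as equicontinuity. Concretely, given a compact set $K\subseteq X$ and $\eps>0$, cover $K$ by finitely many balls centered at points $k_1,\ldots,k_m\in K$ of radius $\eps/(3(R+1))$; choosing $\alpha$ large enough that $\|T_\alpha k_i - k_i\|<\eps/3$ for every $i$, the triangle inequality together with $\|T_\alpha\|\leq R$ yields $\sup_{x\in K}\|T_\alpha x - x\|<\eps$. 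In other words, for every compact $K\subseteq X$ and every $\eps>0$ there is a finite-rank operator $T$ with $\sup_{x\in K}\|T(x)-x\|<\eps$, which is precisely the statement that $X$ has the approximation property. This contradicts the hypothesis, so $\id_X\notin R\,\overline{B_{\mathcal{F}(X,X)}}^{WOT}$ for every $R>0$.

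The main obstacle, and the only genuinely delicate point, is the passage from the weak to the strong operator topology. One cannot run the covering argument directly in the weak operator topology, because $WOT$-convergence $T_\alpha\to\id_X$ only gives $y^*(T_\alpha x)\to y^*(x)$, i.e. weak convergence of $T_\alpha x$ to $x$, rather than the norm convergence $\|T_\alpha x - x\|\to 0$ that the equicontinuity estimate needs. The coincidence of the $WOT$- and $SOT$-closures of convex sets is exactly what bridges this gap, and it is essential here that $R\,B_{\mathcal{F}(X,X)}$ is convex (a norm bound on finite-rank operators cuts out a convex set). Everything after that reduction is a routine equicontinuity argument, and the boundedness of the net is what makes pointwise convergence coincide with uniform convergence on compacta.
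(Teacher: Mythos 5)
Your proof is correct and takes essentially the same route as the paper's: both pass between the $WOT$- and $SOT$-closures using the convexity of $R\,B_{\mathcal{F}(X,X)}$ (the Dunford--Schwartz fact recalled just before the lemma), and both then upgrade $SOT$-convergence of the norm-bounded net to uniform convergence on compact sets via a finite $\eps$-net and the triangle inequality, contradicting the failure of the approximation property. The only differences are cosmetic, e.g.\ your net radius $\eps/(3(R+1))$ versus the paper's $\min\left\{\eps/(3R),\eps/3\right\}$, and the direction of the contradiction (you start from the $WOT$ assumption, the paper first excludes the $SOT$-closure and invokes the convexity fact at the end).
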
 

\begin{proof} Let $X$ be a Banach space which fails the approximation property and let us denote the identity map on $X$ by $\id_X$. Then, by definition, $\id_X \not\in \overline{ \mathcal{F} (X, X)}^{\,\tau}$, where $\,\tau$ is the topology of uniform convergence on compact sets. For given $R > 0$, let us prove that $\id_X \not\in R \overline{B_{\mathcal{F}(X, X)}}^{SOT}$. In order to get a contradiction, let us assume $\id_X \in R \overline{B_{\mathcal{F}(X, X)}}^{SOT}$. Then there exists a net $(T_\alpha)_{\alpha \in \Lambda} \subset R B_{\mathcal{F}(X,X) }$ such that $T_\alpha \xrightarrow{SOT} \id_X$. Now, let $K$ be a compact set in $X$ and $\eps > 0$ be given. Choose a $\left( \min \left\{\frac{\eps}{3R}, \frac{\eps}{3} \right\}\right)$-net $\{x_1, \ldots, x_k\}$ for $K$. Pick $\alpha_0 \in \Lambda$ such that for every $\alpha \geq \alpha_0$ 
\begin{equation*} 
\max_{1 \leq i \leq k} \|T_\alpha (x_i) - \id_X (x_i) \| = \max_{1 \leq i \leq k} \|T_\alpha (x_i) - x_i \| < \frac{\eps}{3}. 
\end{equation*} 
Given $x \in K$, take $i \in \{1,\ldots, k\}$ so that $\|x - x_i \| < \min \left\{\frac{\eps}{3R}, \frac{\eps}{3} \right\}$. Then, 
	\begin{align*}
	\|  T_\alpha (x) - \id_X (x) \|  &\leq \|T_\alpha (x) - T_\alpha (x_i)\| + \|T_\alpha (x_i) - x_i \| +  \|x_i - x \|  \\
	&\leq   \|T_\alpha \| \|x - x_i \| + \frac{\eps}{3} + \frac{\eps}{3} \\
	&< \frac{\eps}{3} + \frac{\eps}{3} + \frac{\eps}{3} = \eps
	\end{align*} 
	for every $\alpha \geq \alpha_0$. This implies that $\id_X \in \overline{\mathcal F (X, X)}^{\,\tau}$, a contradiction. So, $\id_X \not\in R \overline{ B_{\mathcal{F}(X, X)}}^{SOT} = R\overline{ B_{\mathcal{F}(X, X)}}^{WOT}$.
\end{proof}

Now we are ready to prove Theorem \ref{counterexample}.

\begin{proof}[Proof of Theorem \ref{counterexample}] Let $X$ be a closed subspace of $c_0$ which fails the approximation property (see, for instance, \cite[Theorem 2.d.6]{LT}). Then, by Lemma \ref{lemma:APSOT}, the identity map on $X$ does not belong to $R \overline{ B_{\mathcal{F}(X, X) }}^{WOT}$ for any $R > 0$. Let $Y = (X, \nn \cdot)$, where $\nn \cdot$ is the norm that defines Read's space. Let us denote by $\iota \in {\mathcal{L} (X, Y)}$ the formal identity map from $X$ to $Y$. Then $T=\iota / \| \iota \|$ does not belong to $R \overline{ B_{\mathcal{F}(X, Y) }}^{WOT}$ for any $R > 0$. It follows that $T$ does not belong to $R \overline{ B_{\mathcal{F}(X, Y) }}^{w^*}$, where the previous weak-star topology refers to $\sigma(\mathcal{L}(X, Y^{**}), X \pten Y^*)$, for any $R > 0$. Indeed, if $T \in R \overline{ B_{\mathcal{F}(X, Y) }}^{w^*}$ for some $R > 0$, given $x \in X$, $y^* \in Y^*$ and $\eps >0$, there exists $T_0 \in R B_{\mathcal{F}(X, Y)}$ such that 
	\[
	\left| y^* ( T(x) - T_0 (x)) \right| = \left| (T- T_0) ( x \otimes y^*) \right| < \eps,
	\]
which implies that $T \in R \overline{B_{\mathcal{F} (X, Y)}}^{WOT}$, a contradiction. 
In particular, $T$ does not belong to $\overline{ B_{\mathcal{F}(X, Y) }}^{w^*}$. As $Y^*$ is separable, by Lemma \ref{counter:lemma1}, $T$ does not belong to $\overline{B_{\mathcal{F}(X, Y^{**})}}^{w^*}$. 
Thus, by the Hahn-Banach theorem we have that the  unit ball $B_{\mathcal{F} (X, Y^{**})}$ is not norming for $X \pten Y^*$. Take $z \in X \pten Y^*$ with $\|z\|_{\pi} = 1$ and $\alpha >0$ such that 
	\begin{equation}\label{alpha}
	\sup \{ |G(z)| : G \in B_{\mathcal{F}(X, Y^{**})} \} < 1 - \alpha.
	\end{equation} 
	
\noindent	 \vspace{0.1cm}
{\bf Claim}: $\displaystyle \dist \left(z, \NA_{\pi} (X \pten Y^*) \right) > \frac{\alpha}{2}$. 

If this is not the case, there exists $z' \in \NA_{\pi} (X \pten Y^*)$ such that $\|z-z'\|_{\pi} \leq \frac{\alpha}{2}$. This implies that $\|z'\|_{\pi} \geq 1 - \frac{\alpha}{2}$. Let $G \in \mathcal{L} (X, Y^{**})$ with $\|G \| = 1$ such that $|G(z')| = \|z'\|_{\pi}$. In particular, $G\in \NA(X,Y^{**})$ by Theorem \ref{theo:charprojattain}. Notice that $Y^{**} = Y^{\perp\perp}$ is a closed subspace of $\mathcal{R}^{**}$, so $Y^{**}$ is strictly convex. Thus, we have that $G \in \mathcal{F}(X, Y^{**})$ by \cite[Lemma 2]{martinjfa}, which implies by \eqref{alpha} that $|G(z) | < 1- \alpha$. Nevertheless,
\begin{equation*} 
	|G(z)| \geq |G(z')| - \|z-z'\|_{\pi} \geq 1- \frac{\alpha}{2} - \frac{\alpha}{2} = 1 - \alpha,
\end{equation*} 
which is a contradiction.
\end{proof}

\begin{remark}\label{remark:NAnormante}
Notice that from the above proof it follows that, given two Banach spaces $X$ and $Y$, if $\NA_{\pi}(X\pten Y)$ is dense in $X\pten Y$, then $\NA(X,Y^*)\cap B_{\mathcal L(X,Y^*)}$ is norming for $X\pten Y$. 
\end{remark}

In fact, from the proof of Theorem \ref{counterexample} (and its lemmas) we extract more information. Recall that for every non-zero tensor $u \in X \otimes Y$, there is a smallest $N \in \N$ for which there is a representation for $z$ containing $N$ terms. The number $N$ is known as the rank of $u$. Because of this, we will say that $u$ is a {\it finite-rank tensor} if $u \in X\otimes Y$. Although it is not known whether every finite-rank operator can be approximated by norm-attaining operators, the case for tensors does not hold in general.

\begin{proposition} There are tensors of finite-rank which do not attain their projective norm.
\end{proposition}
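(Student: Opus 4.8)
The plan is to extract a concrete finite-rank tensor from the machinery already developed in the proof of Theorem \ref{counterexample}, rather than to construct a new example from scratch. Recall that in that proof we produced subspaces $X\subseteq c_0$ and $Y\subseteq\mathcal{R}$, together with a tensor $z\in X\pten Y^*$ of norm one, such that $\dist(z,\NA_\pi(X\pten Y^*))>\frac{\alpha}{2}$ for some $\alpha>0$. The key observation is that $z$ may be chosen to be a \emph{finite-rank} tensor. Indeed, the separating functional produced via Hahn-Banach in \eqref{alpha} witnesses that $B_{\mathcal{F}(X,Y^{**})}$ is not norming for \emph{some} element of the unit sphere of $X\pten Y^*$; since finite-rank tensors $X\otimes Y^*$ are dense in $X\pten Y^*$, the witnessing element $z$ can be taken in $X\otimes Y^*$ after a routine density adjustment (shrinking $\alpha$ slightly to absorb the approximation error).

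First I would revisit the final lines of the proof of Theorem \ref{counterexample}: the condition \eqref{alpha} only requires $\sup\{|G(z)|:G\in B_{\mathcal{F}(X,Y^{**})}\}<1-\alpha$ for a norm-one $z$. Because $X\otimes Y^*$ is norm-dense in $X\pten Y^*$, I can replace $z$ by a nearby finite-rank tensor $z_0\in X\otimes Y^*$ with $\|z-z_0\|_\pi$ small and $\|z_0\|_\pi=1$, which still satisfies $\sup\{|G(z_0)|:G\in B_{\mathcal{F}(X,Y^{**})}\}<1-\alpha'$ for a slightly smaller $\alpha'>0$. Then I would rerun the \textbf{Claim} verbatim with $z_0$ in place of $z$: if some $z'\in\NA_\pi(X\pten Y^*)$ were within $\frac{\alpha'}{2}$ of $z_0$, the norm-attaining functional $G$ for $z'$ would, by Theorem \ref{theo:charprojattain} and \cite[Lemma 2]{martinjfa}, lie in $\mathcal{F}(X,Y^{**})$, contradicting the separation inequality exactly as before. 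Hence $z_0$ is a finite-rank tensor that does not attain its projective norm, and in fact cannot even be approximated by norm-attaining ones.

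The step requiring the most care is the density adjustment. I must verify that passing from $z$ to a finite-rank approximant $z_0$ preserves a strict separation inequality of the same form; this is harmless because the supremum defining the norming constant varies continuously (Lipschitz-ly, with constant the operator norm) under perturbation of $z$ in the projective norm, so a sufficiently close $z_0$ keeps the supremum below $1-\alpha'$. I would also confirm that the whole argument genuinely only used that $z\in X\pten Y^*$ has norm one and satisfies the separation bound, never that it had infinite rank, so that the conclusion $\dist(z_0,\NA_\pi(X\pten Y^*))>\frac{\alpha'}{2}>0$ goes through unchanged. The main subtlety is therefore purely bookkeeping: choosing $z_0$ finite-rank while retaining the strict inequality, after which the proof is a direct transcription of the preceding argument.
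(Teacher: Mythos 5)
Your proposal is correct and takes essentially the same approach as the paper: both combine the element $z$ from Theorem \ref{counterexample} having $\dist\left(z, \NA_{\pi}(X \pten Y^*)\right) \geq \alpha > 0$ with the norm-density of the finite-rank tensors $X \otimes Y^*$ in $X \pten Y^*$. The only difference is where the perturbation happens: the paper perturbs \emph{after} the distance estimate is in hand (any finite-rank $u$ with $\|z - u\|_{\pi} < \frac{\alpha}{2}$ fails to attain by the triangle inequality alone), whereas you perturb beforehand and re-run the Claim with a stability check on the separation inequality --- valid, but more work than needed.
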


\begin{proof} Consider $X$ and $Y^*$ as in Theorem \ref{counterexample}. Then, there exist $\alpha > 0$ and $z \in X \pten Y^*$ such that $\dist (z, \NA_{\pi}(X \pten Y^*)) \geq \alpha$. Now, take $u$ of finite-rank such that $\|z - u\|_{\pi} < \frac{\alpha}{2}$. Then, this element cannot attain its projective norm.
\end{proof}

As we have commented at the beginning of this section, let us notice that from the proof of Theorem \ref{counterexample}, there exist some Banach spaces $X$ and $Y$ such that $\NA(X, Y^{**}) \cap B_{\mathcal{L}(X, Y^{**})}$ is not $w^*$-dense in $ B_{\mathcal{L}(X, Y^{**})}$. Actually, we have the following result.

\begin{corollary} \label{theo:wstartdensity} There are Banach spaces $X$ and $Y$ such that
	\begin{equation*} 	
	\overline{\co(\NA(X, Y^{**}) \cap B_{\mathcal{L} (X, Y^{**})}) }^{w^*} \not= B_{\mathcal{L}(X, Y^{**})}.
	\end{equation*} 
\end{corollary}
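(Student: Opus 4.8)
The plan is to deduce Corollary~\ref{theo:wstartdensity} directly from the machinery already built up in the proof of Theorem~\ref{counterexample}, and in particular from the fact established there that a certain norm-one operator $T$ fails to lie in the $w^*$-closure of $R\, B_{\mathcal F(X,Y^{**})}$ for \emph{any} $R>0$. The key observation is that the containment $\NA(X,Y^{**})\cap B_{\mathcal L(X,Y^{**})}\subseteq \mathcal F(X,Y^{**})$ holds in our setting, since $Y^{**}$ is strictly convex and $X$ is a subspace of $c_0$, so that \cite[Lemma 2]{martinjfa} applies. Thus every norm-attaining operator of norm at most one is automatically of finite rank, and the left-hand side of the desired inequality is contained in the $w^*$-closed convex hull of $B_{\mathcal F(X,Y^{**})}$.

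First I would take $X$ and $Y$ exactly as in Theorem~\ref{counterexample}: $X$ a closed subspace of $c_0$ failing the approximation property, and $Y=(X,\nn\cdot)$ carrying Read's norm, so that $Y^{**}$ is strictly convex. I would then record the inclusion
\begin{equation*}
\overline{\co\left(\NA(X,Y^{**})\cap B_{\mathcal L(X,Y^{**})}\right)}^{w^*}\subseteq \overline{\co\left(B_{\mathcal F(X,Y^{**})}\right)}^{w^*}=\overline{B_{\mathcal F(X,Y^{**})}}^{w^*},
\end{equation*}
where the last equality uses that $B_{\mathcal F(X,Y^{**})}$ is already convex and balanced. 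Next I would invoke the normalized formal identity $T=\iota/\nn{\iota}$ from the proof of Theorem~\ref{counterexample}, which satisfies $\nn{T}=1$ and, as shown there, does \emph{not} belong to $\overline{B_{\mathcal F(X,Y^{**})}}^{w^*}$ (this is precisely the point where the failure of the approximation property, Lemma~\ref{lemma:APSOT}, and the separability argument of Lemma~\ref{counter:lemma1} are used). Since $T\in B_{\mathcal L(X,Y^{**})}$, this exhibits an element of $B_{\mathcal L(X,Y^{**})}$ lying outside the set on the left, which proves the strict inequality.

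The only mild subtlety to address is that Theorem~\ref{counterexample} establishes $T\notin R\,\overline{B_{\mathcal F(X,Y)}}^{w^*}$ and then transfers this to $T\notin \overline{B_{\mathcal F(X,Y^{**})}}^{w^*}$ via Lemma~\ref{counter:lemma1}; I would simply reuse that conclusion with $R=1$, so no new estimates are required. I expect the main (though minor) obstacle to be bookkeeping: making sure the convex hull on the left really stays inside $\overline{B_{\mathcal F(X,Y^{**})}}^{w^*}$, which follows because the norm-attaining operators of norm at most one are finite-rank operators in $B_{\mathcal F(X,Y^{**})}$ and this ball is convex, so taking convex hulls and $w^*$-closures does not escape it. Combining the inclusion with $T\notin\overline{B_{\mathcal F(X,Y^{**})}}^{w^*}$ completes the argument.
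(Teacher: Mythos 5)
Your proposal is correct and is essentially the paper's own (implicit) argument: the corollary is stated as an immediate extraction from the proof of Theorem \ref{counterexample}, where it is shown that $T=\iota/\Vert\iota\Vert$ lies outside $\overline{B_{\mathcal{F}(X,Y^{**})}}^{w^*}$ (via Lemmas \ref{lemma:APSOT} and \ref{counter:lemma1}), while strict convexity of $Y^{**}$ together with \cite[Lemma 2]{martinjfa} forces $\NA(X,Y^{**})\cap B_{\mathcal{L}(X,Y^{**})}$ into the convex set $B_{\mathcal{F}(X,Y^{**})}$, so the $w^*$-closed convex hull cannot contain $T$. Your bookkeeping on the convex hull and the use of $R=1$ are exactly what is needed; there are no gaps.
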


\section{Open Questions}
 
In this section, we would like to discuss and propose some open questions.

We  have proved that if $H$ is a complex Hilbert space, every tensor in $H \pten H$ attains its projective norm (see Proposition \ref{prop:hilbertnuclearallNA}) and that the set $\NA_{\pi}(  L_p (\mu) \pten L_q (\nu) )$ is dense in $ L_p (\mu) \pten L_p (\nu)$ for $1 < p,q < \infty$ and measures $\mu$ and $\nu$ (see Example \ref{examples:propertyP}.(b)). However, we do not know what happens in general when both factors are reflexive spaces. 

\begin{question} Let $X, Y$ be reflexive Banach spaces. Is it true that the set of all norm-attaining tensors is dense in $X \pten Y$?
\end{question} 

Let us notice that, by trying to mimic the proof of Theorem \ref{counterexample} (and its lemmas) for the nuclear operator case, one would realize that
 \begin{equation*}\label{thm:condition_nuclear}
 (\ker J )^\perp \neq \overline{ (\ker J)^\perp \cap F ( Y , X^{**})}^{w^*}  
 \end{equation*}
needs to be one the hypothesis (which we cannot guarantee that it holds). We do not know if there is a version of Theorem \ref{counterexample} for nuclear operators.

\begin{question} Are there Banach spaces $X$ and $Y$ so that $\NA_{\mathcal{N}}(X,Y)$ is not dense in $\mathcal N(X,Y)$?
\end{question}

We say that a Banach space $X$ has {\it property quasi-}$\alpha$ if, for an index set $\Gamma$, there are $A = \{x_{\gamma} \in S_X: \gamma \in \Gamma\}$, $A^* = \{x_{\gamma}^* \in S_{X^*}: \gamma \in \Gamma \}$, and $\lambda: A \longrightarrow \R$ such that $x_{\gamma}^*(x_{\gamma}) = 1$ for every $\gamma \in \Gamma$; $|x_{\gamma}^* (x_{\eta})| \leq \lambda(x_{\gamma}) < 1$ for $\gamma \not= \eta$; and for every $e \in \Ext(B_{X^{**}})$, there is a subset $A_e \subseteq A$ and a scalar $t$ with $|t| = 1$ such that $te \in \overline{Q(A_e)}^{w^*}$ and $r_e = \sup \{\lambda(x): x \in A_e\} < 1$, where $Q$ is the canonical embedding on $X^{**}$ (see \cite{CS}). Let us notice that property quasi-$\alpha$ is weaker than property $\alpha$ introduced by W. Schachermayer in \cite{schaalp}. We have proved that $\NA_{\pi}(\ell_1 \pten Y) = \ell_1 \pten Y$ for every Banach space $Y$ (see Proposition \ref{exam:nucopec0attain}). Consequently, by using Proposition \ref{prop:allprojattain}, we get that 
\begin{equation*}
\overline{\NA(\ell_1 \times Y)}^{\|\cdot\|} = \mathcal{B}(\ell_1 \times Y)
\end{equation*}
for every Banach space $Y$. This is a particular case of \cite[Theorem 2.17]{CS}, which we wonder if it could be extended in the following sense.

\begin{question} Let $X$ be a Banach space with property $\alpha$ (or quasi-$\alpha$). Is it true that the equality $\NA_{\pi}(X \pten Y) = X \pten Y$ holds for every Banach space $Y$?
\end{question}

\textbf{Acknowledgements:} The authors are grateful to William B. Johnson for pointing our an error in the proof of their original version of Lemma \ref{lemma:APSOT}. They are also grateful to Gilles Godefroy, Petr H\'ajek, and Tommaso Russo for giving proper references for the metric $\pi$-property. They also would like to thank Richard Aron, Gin\'es L\'opez P\'erez, and Miguel Mart\'in for fruitful conversations on the topic of the paper. Finally, the authors want to thank the anonymus referee for his/her valuable mathematical contributions and the linguistic suggestions which have highly improved the quality and the exposition of the paper.

\end{document}